\theoremstyle{definition}
\newtheorem{theorem}{Theorem}[section]
\newtheorem{proposition}[theorem]{Proposition}
\newtheorem{lemma}[theorem]{Lemma}
\newtheorem{remark}[theorem]{Remark}
\newtheorem{corollary}[theorem]{Corollary}
\numberwithin{equation}{section}
\DeclareMathOperator*{\spa}{span}
\newcommand{\e}{\operatorname{e}}
\newcommand{\pr}{\partial}
\newcommand{\Lap}{\Delta}
\DeclareMathOperator*{\diam}{diam}
\title[{\L}ojasiewicz inequalities for mean convex self-shrinkers]{{\L}ojasiewicz inequalities for mean convex self-shrinkers}
\author{Jonathan J. Zhu}
\address{Mathematical Sciences Institute, Australian National University, Hanna Neumann Building, Science Road, Canberra, ACT 2601, Australia and Department of Mathematics, Princeton University, Fine Hall, Washington Road, Princeton, NJ 08544, USA}
\email{jjzhu@math.princeton.edu}
\begin{document}
\begin{abstract}
We prove {\L}ojasiewicz inequalities for round cylinders and cylinders over Abresch-Langer curves, using perturbative analysis of a quantity introduced by Colding-Minicozzi. A feature is that this auxiliary quantity allows us to work essentially at first order. This new method interpolates between the higher order perturbative analysis used by the author for certain shrinking cylinders, and the differential geometric method used by Colding-Minicozzi for the round case. 
\end{abstract}
\date{\today}
\maketitle

\section{Introduction}

Self-shrinkers are submanifolds $\Sigma^n\subset\mathbb{R}^N$ satisfying the elliptic PDE $\phi := -\mathbf{H} + \frac{x^\perp}{2}=0$; they serve as singularity models for the mean curvature flow. {\L}ojasiewicz inequalities have been successful for proving the uniqueness of tangent flows for a variety of model shrinkers \cite{S14, CM15, CM19, CS19}, and `explicit' forms can also be used to establish rigidity in the class of shrinkers \cite{ELS18, SZ20}. Explicit {\L}ojasiewicz inequalities for a class of shrinking cylinders were proven by the author in \cite{Z20}, and previously by Colding and Minicozzi for the case of round cylinders \cite{CM15, CM19}. The purpose of this note is to provide a bridge between these two approaches. 

Specifically, in \cite{Z20} we used Taylor expansion of the shrinker quantity $\phi$ while in \cite{CM15,CM19} a pointwise differential geometric method is used, relying on an auxiliary quantity $\tau = \frac{A}{|\mathbf{H}|}$. In this note we show that Taylor expansion of $\tau$, combined with the techniques in \cite{Z20}, yields another {\L}ojasiewicz inequality for round cylinders and Abresch-Langer cylinders. Denoting by $\mathcal{C}_n(\mathring{\Gamma})$ the set of all rotations of $\mathring{\Gamma}^k \times \mathbb{R}^{n-k} \subset \mathbb{R}^N$ about the origin, we prove:

\begin{theorem}[{\L}ojasiewicz inequality of the first kind]
\label{thm:lojasiewicz-tau-intro}
 Let $\mathring{\Gamma}$ be a round shrinking sphere or an Abresch-Langer curve. 
There exists $\epsilon_2>0$ so that for any $\epsilon_1$, $\lambda_0$, $C_j$ there exist $R_0, l_0$  such that if $l\geq l_0$, $\Sigma^n\subset \mathbb{R}^N$ has $\lambda(\Sigma)\leq \lambda_0$ and:
\begin{enumerate}
\item For some $R>R_0$, we have that $B_R\cap \Sigma$ is the graph of a normal field $U$ over some cylinder in $\mathcal{C}_n(\mathring{\Gamma})$ with $\|U\|_{C^{2}(B_R)} \leq \epsilon_2$ and $\|U\|_{L^2(B_R)}^2 \leq \e^{-R^2/8}$; 
\item $|\nabla^j A| \leq C_j$ on $B_R\cap \Sigma$ for all $j \leq l$;
\end{enumerate}
then there is a cylinder $\Gamma \in\mathcal{C}_n(\mathring{\Gamma})$ and a compactly supported normal vector field $V$ over $\Gamma$ with $\|V\|_{C^{2,\alpha}} \leq \epsilon_1$, such that $\Sigma \cap B_{R-6}$ is contained in the graph of $V$, and 

\[\|V\|_{L^2}^2 \leq C(\|\phi\|_{L^1(B_R)}^{a_l} +\|\phi\|_{L^2(B_R)}^{2a_l} + (R-5)^{a_l n} e^{-a_l(R-5)^2/4}),\] 

where $C=C(n,l,C_l,\lambda_0, \epsilon_1)$ and $a_l \nearrow 1$ as $l\to \infty$. 
\end{theorem}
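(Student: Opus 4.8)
The plan is to run the perturbative scheme of \cite{Z20}, but applied to the Colding--Minicozzi tensor $\tau=A/|\mathbf{H}|$ in place of the shrinker quantity $\phi$. The point is that $\tau$ is invariant under the ambient rotations, and on each model cylinder it equals a \emph{fixed} tensor $\tau_\Gamma$ -- for round spheres and Abresch--Langer curves alike, normalizing by $|\mathbf{H}|$ (which is nowhere zero on these curves) removes the effect of the varying curvature -- so that a single first-order linearization of $\tau$ already detects the whole normal field, with no ``drift along the Jacobi directions'' requiring a second-order term. All norms below are taken with respect to the Gaussian weight $e^{-|x|^2/4}$, as in \cite{CM15,CM19,Z20}, except for $\|\phi\|_{L^1(B_R)}$ and $\|\phi\|_{L^2(B_R)}$, which are unweighted. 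By~(1), $B_R\cap\Sigma$ is the graph of a normal field $U$ over some $\Gamma_0\in\mathcal{C}_n(\mathring\Gamma)$ with $\|U\|_{C^2}\le\epsilon_2$; by~(2) and interior Schauder estimates $U$ has $C^{2,\alpha}$ bounds in terms of $n,l,C_l$, and $\lambda(\Sigma)\le\lambda_0$ furnishes the usual local area and density bounds. Following \cite{Z20}, I replace $\Gamma_0$ by a cylinder $\Gamma\in\mathcal{C}_n(\mathring\Gamma)$, obtained from a small ambient rotation chosen by an implicit function theorem / degree argument, so that the normal field $U$ over $\Gamma$ is orthogonal to the finite-dimensional space $\mathcal J$ of Jacobi fields generated by the rotation action on $\mathcal{C}_n(\mathring\Gamma)$; the finitely many genuinely unstable modes of the drift-Jacobi operator are split off and controlled directly, since that operator is invertible on them. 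The output field is $V:=\chi U$ for a fixed cutoff $\chi$ with $\chi\equiv1$ on $B_{R-6}$ and $\operatorname{supp}\chi\subset B_{R-5}$, so $\Sigma\cap B_{R-6}$ lies in the graph of $V$; the cutoff error is supported on the annulus $B_{R-5}\setminus B_{R-6}$, whose weighted volume is $\lesssim(R-5)^{n}e^{-(R-5)^2/4}$, and this is the source of the explicit exponential term in the conclusion.

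Next comes the first-order analysis of $\tau$ and the energy estimate. Taylor-expand $\tau=\tau_\Gamma+\mathcal T_1(U)+\mathcal Q(U)$, where $\mathcal T_1$ is linear in $(U,\nabla U,\nabla^2U)$ and $\mathcal Q$ collects the quadratic-and-higher remainder. The structural fact from \cite{CM15,CM19} is that, because $\tau$ is rotation-invariant, $\mathcal T_1$ annihilates $\mathcal J$; hence on $\mathcal J^\perp$, where $U$ now lies, the linear operator $L_\tau$ governing $\mathcal T_1$ -- equivalently the drift operator obtained by linearizing the Simons-type identity that $\tau$ satisfies on an exact shrinker -- is coercive, with a uniform spectral gap $\mu>0$. (This single gap is what replaces the combination of ``coercivity modulo Jacobi fields'' with a second-order expansion of $\phi$ used in \cite{Z20}.) On our near-shrinker the Simons-type identity for $\tau$ carries an extra remainder controlled pointwise by $|\phi|$ together with its derivatives, hence, using~(2), by $\sum_{j\le l}C_l\,|\nabla^j\phi|$. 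Substituting the expansion of $\tau$ into the identity, testing against $(\tau-\tau_\Gamma)\chi^2$ in the weighted inner product, integrating by parts, and using ellipticity together with the gap $\mu$, one reaches
\[
\|U\|^2_{L^2(B_{R-5})}\;\lesssim\;\|\phi\|_{H^l(B_R)}\,\|U\|_{L^2(B_{R-5})}\;+\;\epsilon_2\,\|U\|^2_{L^2(B_{R-5})}\;+\;(R-5)^{n}e^{-(R-5)^2/4}.
\]
Since $\|U\|_{C^2}\le\epsilon_2$ the middle term is absorbed; dividing through gives $\|U\|_{L^2(B_{R-5})}\lesssim\|\phi\|_{H^l(B_R)}+(R-5)^{n}e^{-(R-5)^2/4}$, the weighted $H^l$ norm being legitimate because (2) bounds $\nabla^j\phi$ for $j\le l$.

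It remains to trade the higher-order weighted norm $\|\phi\|_{H^l(B_R)}$ for the norms in the statement. Using the bounds on $\nabla^jA$ for $j\le l$ -- hence on $\nabla^j\phi$ -- together with standard interpolation between $L^1$, $L^2$, and $C^j$ norms, each weighted derivative norm of $\phi$ is dominated by a power of $\|\phi\|_{L^1(B_R)}+\|\phi\|_{L^2(B_R)}$ plus a Gaussian tail, and the finite number $l$ of controlled derivatives governs how close this exponent gets to $1$; this is exactly the mechanism producing $a_l\nearrow1$ in \cite{Z20}. Since $\|V\|_{L^2}\le\|U\|_{L^2(B_{R-5})}$, combining the last estimates yields the asserted inequality with $C=C(n,l,C_l,\lambda_0,\epsilon_1)$; and $\|V\|_{C^{2,\alpha}}\le\epsilon_1$ follows from the strong $L^2$ control of $U$ near the origin provided by~(1), the higher-derivative bounds~(2), interpolation, and the exponential decay of the weight near $\partial B_{R-6}$, once $R_0$ is taken large.

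The main obstacle is the uniform spectral gap $\mu>0$ for the linearized operator $L_\tau$ on the \emph{truncated, non-compact} cylinder: the $\mathbb R^{n-k}$ factor makes $L_\tau$ non-Fredholm, and it is precisely the passage from $\phi$ to $\tau$ that circumvents this -- one must show that, once the rotational Jacobi fields are projected out, $L_\tau$ has no small eigenvalues, uniformly in $R$, and then carry this through the cutoff without the annular errors degrading past the stated $(R-5)^{a_l n}e^{-a_l(R-5)^2/4}$. A secondary difficulty is the Abresch--Langer case, where the homogeneous-space computations of \cite{CM15,CM19} are not available: although $\tau_\Gamma$ is again a constant tensor, $|\mathbf{H}|$ is not bounded below uniformly across the family, so the spectral analysis on each Abresch--Langer cylinder must instead be set up using the structure developed in \cite{Z20}.
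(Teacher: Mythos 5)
Your sketch captures the high‑level philosophy (use $\tau=A/|\mathbf{H}|$ to replace the second‑order expansion of $\phi$ from \cite{Z20}, then run the rotation/cutoff machinery), but the actual perturbative step is structured quite differently in the paper, and your version has gaps that would not close.

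First, the linearization you propose is the wrong object. You Taylor‑expand $\tau$ itself and claim the linear operator $\mathcal{T}_1$ is coercive with a spectral gap $\mu>0$ on $\mathcal{J}^\perp$; no such coercivity is true or claimed. What the paper uses is the scalar quantity $\mathcal{T}=|\nabla^\perp\tau|^2$, whose \emph{first} variation vanishes identically on the model (since $\nabla^\perp\tau\equiv 0$), so the useful information is in the \emph{second} variation $\mathcal{D}^2\mathcal{T}(J',J')=2|(\nabla^\perp\tau)_s|^2$ evaluated on the non‑rotational Jacobi fields $J'\in\mathcal{K}_1$ (Propositions \ref{prop:dDtauS}, \ref{prop:dDtauAL}). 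This is a quadratic lower bound $\|\mathcal{D}^2\mathcal{T}(J',J')\|_{L^1}\gtrsim \|J'\|_{L^2}^2$ for the one direction where $\phi$'s first variation has a degenerate kernel — not a spectral‑gap estimate for an invertible operator. In particular, your claim that the $\tau$ analysis handles ``the whole normal field, with no drift along the Jacobi directions requiring a second‑order term'' overstates what $\tau$ does: the part $h=\pi_{\mathcal{K}^\perp}(U)$ is still controlled by the first‑order expansion of $\phi$ via Proposition \ref{prop:ord1}, and the paper explicitly remarks that the first‑order $\phi$ analysis is not avoidable.

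Second, and more seriously, you omit the auxiliary quantity $P$ entirely. The passage from the $\tau$‑estimate to an estimate purely in $\phi$ is not via a direct energy estimate testing the ``Simons‑type identity for $\tau$'' against $(\tau-\tau_\Gamma)\chi^2$: the elliptic PDE satisfied by $|\nabla^\perp\tau|^2$ has inhomogeneous term $P$, and the paper needs the integral estimate $\||\nabla^\perp\tau|^2_U\|_{L^1}\lesssim\|P_U\|_{L^1}+\|\phi_U\|_{W^{2,1}}+\|\phi_U\|_{W^{1,2}}^2$ (Proposition \ref{prop:dtauP}), followed by a Taylor expansion bound for $\|P_U\|_{L^1}$ (Proposition \ref{prop:Pest}). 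The latter requires showing $\mathcal{D}\mathcal{P}=0$ and $\mathcal{D}^2\mathcal{P}(V,V)=0$ for Jacobi fields — for Abresch–Langer cylinders these are new computations in Section \ref{sec:var} (Lemma \ref{lem:dPAL}, Proposition \ref{prop:d2PAL}), not inherited from \cite{CM15,CM19}. Without $P$, your proposed remainder ``controlled pointwise by $|\phi|$ and its derivatives'' is not the full story, and the resulting energy inequality would not close. A smaller point: the pre‑rotated $\Gamma_0$ and the rotated $\Gamma$ differ, so $\|V\|_{L^2}\le\|U\|_{L^2(B_{R-5})}$ as written is not justified; the rotation contributes an extra projection error (\ref{eq:projV}) that must be estimated.
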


\begin{theorem}[{\L}ojasiewicz inequality of the second kind]
\label{thm:lojasiewicz-grad} 
 Let $\mathring{\Gamma}$ be a round shrinking sphere or an Abresch-Langer curve. 
There exists $\epsilon_2>0$ so that for any $\lambda_0$, $C_j$ there exist $R_0, l_0$ such that if $l\geq l_0$, $\Sigma^n\subset \mathbb{R}^N$ has $\lambda(\Sigma)\leq \lambda_0$ and:
\begin{enumerate}
\item For some $R>R_0$, we have that $B_R\cap \Sigma$ is the graph of a normal field $U$ over some cylinder in $\mathcal{C}_n(\mathring{\Gamma})$ with $\|U\|_{C^{2}(B_R)} \leq \epsilon_2$ and $\|U\|_{L^2(B_R)}^2 \leq \e^{-R^2/8}$; 
\item $|\nabla^j A| \leq C_j$ on $B_R\cap \Sigma$ for all $j \leq l$;
\end{enumerate}
then for $C=C(n,\beta,l,C_l,\lambda_0)$ we have 

\[ |F(\Sigma) - F(\mathring{\Gamma})| \leq C(\|\phi\|_{L^2}^\frac{3a_l}{2} + (R-6)^{n} \e^{-\frac{(R-6)^2}{4}}) .\]
\end{theorem}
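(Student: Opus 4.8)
The plan is to derive Theorem~\ref{thm:lojasiewicz-grad} from Theorem~\ref{thm:lojasiewicz-tau-intro} together with the first variation formula for the Gaussian area $F$, so that the serious analytic work will already be done. Fix all the constants and apply Theorem~\ref{thm:lojasiewicz-tau-intro} to produce a cylinder $\Gamma\in\mathcal{C}_n(\mathring{\Gamma})$ and a compactly supported normal field $V$ over $\Gamma$ with $\|V\|_{C^{2,\alpha}}\le\epsilon_1$, $\Sigma\cap B_{R-6}\subset\operatorname{graph}(V)$, and $\|V\|_{L^2}^2$ controlled by the right-hand side of that theorem. Since $F$ is invariant under rotations about the origin, $F(\mathring{\Gamma})=F(\Gamma)$, so it suffices to estimate $F(\Sigma)-F(\Gamma)$. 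First I would discard the exterior: the entropy bound $\lambda(\Sigma)\le\lambda_0$ forces Euclidean volume growth $|\Sigma\cap B_\rho|\le C_n\lambda_0\rho^n$, and a dyadic decomposition in $\rho$ then gives $F(\Sigma\setminus B_{R-6})\le C_n\lambda_0(R-6)^n\e^{-(R-6)^2/4}$, with the analogous (explicit) estimate for $\Gamma$. Using that $\Sigma$ and $\operatorname{graph}(V)$ coincide inside $B_{R-6}$ up to a set contributing within this error, one reduces to bounding $\Delta F:=F(\operatorname{graph}(V))-F(\Gamma)$, where $V$ is now a globally defined, compactly supported field on $\Gamma$.

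Next I would open up $\Delta F$ via the first variation of $F$ along the straight-line homotopy $s\mapsto\operatorname{graph}(sV)$, $s\in[0,1]$: since $\Gamma$ is a self-shrinker the first-order term vanishes and $\Delta F=-c_n\int_0^1\int_{\operatorname{graph}(sV)}\langle\phi_{sV},V\rangle\,\e^{-|x|^2/4}\,ds$, where $c_n=(4\pi)^{-n/2}$ and $\phi_{sV}$ is the shrinker quantity of $\operatorname{graph}(sV)$, regarded as a field on $\Gamma$. Decompose $V=V_0+V_\perp$, with $V_0$ the weighted-$L^2$ projection of $V$ onto the kernel $\mathcal{K}$ of the linearized shrinker operator $\mathcal{L}$ on $\Gamma$; for a round sphere $\mathcal{K}=\{0\}$, while for a cylinder $\mathring{\Gamma}^k\times\mathbb{R}^{n-k}$ the choice of $\Gamma$ arranges that $V_0$ lies in the finite-dimensional span of the Jacobi fields that are constant on the $\mathring{\Gamma}$ factor and degree-two Hermite in the axial variables. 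Taylor expanding $\phi_{sV}=s\mathcal{L}V+\tfrac12 s^2\mathcal{B}(V,V)+\cdots$ for a fixed bilinear form $\mathcal{B}$, and using $\mathcal{L}V_0=0$ with self-adjointness of $\mathcal{L}$, kills the $O(s)$ contribution of $\langle\phi_{sV},V_0\rangle$. The key technical input I would establish is $\|V_\perp\|_{W^{2,2}_W}\lesssim\|\phi\|_{L^2(B_R)}+\|V_0\|_{L^2_W}^2+\mathcal{E}$, where $\mathcal{E}$ denotes polynomial-times-Gaussian error terms and $W^{2,2}_W$ is the Gaussian-weighted Sobolev norm on $\Gamma$: this rests on invertibility of $\mathcal{L}$ on $\mathcal{K}^\perp$ in the weighted space (the exterior decay from hypothesis~(1) and the derivative bounds~(2) entering here as in \cite{Z20}), on $\|\mathcal{B}(V_0,V_0)\|_{L^2_W}\lesssim\|V_0\|_{L^2_W}^2$ since $\dim\mathcal{K}<\infty$, and on smallness of $\|V\|_{C^2}$ to absorb cross terms.

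I would then estimate the integral slot by slot. The $\langle\phi_{sV},V_\perp\rangle$ part is $\lesssim\|V_\perp\|_{W^{2,2}_W}^2\lesssim(\|\phi\|_{L^2(B_R)}+\|V_0\|_{L^2_W}^2)^2+\mathcal{E}$, which turns out to be lower order. In the $\langle\phi_{sV},V_0\rangle$ part, the $\mathcal{B}(V_0,V_0)$ contribution reassembles, up to $O(\|V_0\|_{L^2_W}^4)$, into $f(V_0):=F(\operatorname{graph}(V_0))-F(\Gamma)$, the restriction of $F$ to graphs lying in $\mathcal{K}$; since the Hessian of $F$ at $\Gamma$ vanishes on $\mathcal{K}$, Taylor expansion gives only $|f(V_0)|\lesssim\|V_0\|_{L^2_W}^3$, while the remaining mixed terms are of size $\lesssim\|V_\perp\|_{W^{2,2}_W}\|V_0\|_{L^2_W}$ (up to factors polynomial in $R$). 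Feeding in Theorem~\ref{thm:lojasiewicz-tau-intro}'s bound $\|V_0\|_{L^2_W}^2\le\|V\|_{L^2}^2\lesssim\|\phi\|_{L^1(B_R)}^{a_l}+\|\phi\|_{L^2(B_R)}^{2a_l}+\mathcal{E}'$ together with $\|\phi\|_{L^1(B_R)}\le\|\phi\|_{L^2(B_R)}|\Sigma\cap B_R|^{1/2}$, one finds that both the cubic term $\|V_0\|^3\lesssim(\|\phi\|_{L^2(B_R)}^{a_l})^{3/2}$ and the mixed terms are of order $\|\phi\|_{L^2(B_R)}^{3a_l/2}$, and everything else is lower order. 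Here hypotheses~(1)--(2) force $\|\phi\|_{L^2(B_R)}$ to be super-exponentially small in $R$, which lets one absorb the polynomial-in-$R$ factors at the cost of decreasing $a_l$ slightly; combining with the exterior errors and choosing $l$ large, the Gaussian errors assemble into a term of the stated form $(R-6)^n\e^{-(R-6)^2/4}$, giving the asserted inequality.

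The main obstacle --- and the reason this is not a one-line consequence of Theorem~\ref{thm:lojasiewicz-tau-intro} --- is the kernel slot. One must see that $F$ behaves genuinely cubically along $\mathcal{K}$, so that $|F(\operatorname{graph}(V_0))-F(\Gamma)|\sim\|V_0\|^3=(\|V_0\|^2)^{3/2}$ rather than $\sim\|V_0\|^2$, and that this, paired with the quadratic bound $\|V_0\|^2\lesssim\|\phi\|^{a_l}$ coming from Theorem~\ref{thm:lojasiewicz-tau-intro}, is exactly what produces the exponent $3/2$; one must also check that the coupling of $V_0$ to the stable mode $V_\perp$ --- which the quadratic nonlinearity drives only to size $\lesssim\|V_0\|^2$ --- contributes no worse. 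Carrying all of this out over a finite ball rather than the idealized weighted cylinder, i.e.\ controlling the passage among $\|\cdot\|_{L^2(B_R)}$, $\|\cdot\|_{L^1(B_R)}$ and the weighted norms and the attendant $a_l$-loss, is precisely where the techniques of \cite{Z20} are needed.
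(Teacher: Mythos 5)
Your proposal is correct and follows essentially the same route as the paper: reduce to $|F(\Gamma_V)-F(\Gamma)|\lesssim \|\phi_V\|_{L^2}^{3/2}+\|V\|_{L^2}^3$ plus an exterior error $\lesssim (R-6)^n\e^{-(R-6)^2/4}$, then feed in the $\|V\|_{L^2}^2$ bound from Theorem~\ref{thm:lojasiewicz-tau-intro} and H\"older. The only difference is that the paper simply cites the proof of \cite[Theorem~1.4]{Z20} for the inequality $|F(\Gamma_V)-F(\Gamma)|\lesssim\|\phi_V\|_{L^2}^{3/2}+\|V\|_{L^2}^3$, whereas you re-derive it via Taylor expansion of $F$ along the straight-line homotopy and the kernel/orthogonal split; your sketch of that re-derivation is sound (minor quibbles aside: your $\mathcal{L}$ is the paper's Jacobi operator $L$, and $\mathcal{K}\neq\{0\}$ for a sphere in higher codimension since rotations into the extra $\mathbb{R}^{N-n-1}$ factor produce nontrivial Jacobi fields).
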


Note that by the work of Colding-Minicozzi \cite{CMgeneric}, the only codimension one, mean convex self-shrinkers are precisely the cylinders over round spheres $\mathbb{S}^k_{\sqrt{2k}}$ and Abresch-Langer \cite{AL} curves $\Gamma_{a,b}$. The inequalities above differ slightly from those in \cite{Z20} by the exponents on the right, but morally they are equivalent since $a_l$ may be taken arbitrarily close to 1. Indeed, the above estimates suffice to give alternative proofs of the uniqueness of tangent flows and rigidity for these mean convex shrinkers (see Remark \ref{rmk:applications}; cf. \cite[Theorems 1.1 and 1.2]{Z20}). 

The advantage of the $\tau$ quantity is that it allows us to perform the variational analysis only at first order, whereas in \cite{Z20} we needed the second order expansion of $\phi$. This is a significant reduction as the complexity of the method increases quickly with the order of expansion. Applying the perturbative method to $\tau$ also explains and quantifies the success of the method used by Colding-Minicozzi \cite{CM15, CM19} for round spheres. 

The key new geometric data, derived in Section \ref{sec:var}, are the (first) variation formulae for $\tau$ and a further auxiliary quantity $P$. We then prove estimates for entire graphs over the cylinders in Section \ref{sec:entire}, using Taylor expansion of $\tau$ in place of the second order analysis in \cite[Section 5.2]{Z20}. Note that the first order analysis of $\phi$ is still required. Some preliminaries are included in Section \ref{sec:prelim} and the {\L}ojasiewicz inequalities of Theorems \ref{thm:lojasiewicz-tau-intro} and \ref{thm:lojasiewicz-grad} are proven in Section \ref{sec:main-est}. 

\subsection*{Acknowledgements}

The author would like to thank Prof. Bill Minicozzi for his encouragement and for several insightful discussions. This work was supported in part by the National Science Foundation under grant DMS-1802984 and the Australian Research Council under grant FL150100126. 


\section{Preliminaries}
\label{sec:prelim}

We consider smooth, properly immersed submanifolds $\Sigma^n\subset \mathbb{R}^N$. 

For a vector $V$ we denote by $V^T$ the projection to the tangent bundle, and $V^\perp = \Pi(V)$ the projection to the normal bundle $N\Sigma$. Given a vector field $U$ on $\Sigma$ with $\|U\|_{C^1}$ small enough, the graph $\Sigma_U$ is the submanifold given by the immersion $X_U(p)= X(p) + U(p)$. We say $\Sigma_U$ is a normal graph if $U^T=0$. 

The second fundamental form is the 2-tensor with values in the normal bundle defined by $A(Y,Z) = \nabla^\perp_Y Z$, and the mean curvature (vector) is $\mathbf{H} =  - A_{ii} $. 
Here, and henceforth, we take the convention that repeated lower indices are summed with the metric, for instance $A_{ii} = g^{ij} A_{ij}$. 
We denote the shrinker mean curvature by $\phi = \frac{1}{2}x^\perp-\mathbf{H}$ and the principal normal by $\mathbf{N} = \frac{\mathbf{H}}{|\mathbf{H}|}$. A submanifold is a shrinker if $\phi\equiv0$ on $\Sigma$. 

Given a vector $V$ we denote $A^V = \langle A,V\rangle$. The Hessian on the normal bundle is given by $(\nabla^\perp\nabla^\perp V)(Y,Z) = \nabla^\perp_Z \nabla^\perp_Y V - \nabla^\perp_{\nabla^T_Z Y} V$. 

For graphs $\Gamma_U$ over a fixed submanifold $\Gamma$, we use subscripts to denote the values of geometric quantities on $\Gamma_U$. We also consider these quantities as second order functionals on (normal) vector fields $U$. For instance, there is a smooth function $\mathcal{\varphi}$ such that $\phi_U =\mathcal{\varphi}(p,U,\nabla U,\nabla^2 U)$. For variations of such quantities, we use the shorthand notation $\mathcal{D}\mathcal{\varphi}(U)$ to mean the variation $\mathcal{D} \mathcal{\varphi}|_0 ([U,\nabla U,\nabla^2 U])$ evaluated at 0, and so forth. 

The Gaussian weight is $\rho=\rho_n=(4\pi)^{-n/2} \e^{-|x|^2/4}$. Here $n$ is the dimension of the submanifold and will be omitted when clear from context. By $L^p$, $W^{k,p}$ we denote the weighted Sobolev spaces with respect to $\rho$. The Gaussian area functional is $F(\Sigma) = \int_\Sigma \rho$. The entropy is $\lambda(\Sigma) = \sup_{y,s>0} F(s(\Sigma-y))$. For a shrinker, $\lambda(\Sigma)=F(\Sigma)$. Note that finite entropy $\lambda(\Sigma)\leq \lambda_0$ implies Euclidean volume growth $|\Sigma \cap B_R| \leq C(\lambda_0) R^n$. 

We will use the following elliptic operators: the drift Laplacian $\mathcal{L} = \Lap -\frac{1}{2}\nabla_{x^T}$; and the Jacobi operator $L = \mathcal{L} +\frac{1}{2} + \sum_{k,l} \langle \cdot,A_{kl}\rangle A_{kl}$. The drift Laplacian is defined on functions and tensors, whilst $L$ is defined on sections of the normal bundle (via $\nabla^\perp$). For such operators, unless otherwise indicated, $\ker$ will refer to the $W^{2,2}$ kernel, for instance $\mathcal{K}= \ker L$. 

We set $\langle x\rangle = (1+|x|^2)^\frac{1}{2}$. On a curve $\Gamma^1\subset\mathbb{R}^2$, we denote the geodesic curvature by $\kappa$ and use dots $\dot{\kappa}=\pr_\sigma \kappa$ to denote differentiation with respect to the arclength parameter $\sigma$. 

We use $C$ to denote a constant that may change from line to line but retains the stated dependencies. 

\subsection{Mean convex self-shrinkers}

In this article, we say that a submanifold $\Gamma^n\subset \mathbb{R}^N$ has `codimension one' if the minimal affine subspace containing $\Gamma$ has dimension $\dim \spa(\Gamma) = n+1$. Note that for shrinkers, the minimal subspace necessarily contains the origin since $\mathcal{L}x = -\frac{1}{2}x$. Consider a codimension one shrinker $\Gamma$. Up to ambient rotation we have $\Gamma^n \subset \mathbb{R}^{n+1} \times \mathbb{R}^{N-n-1}$. Moreover the normal bundle is trivial and is spanned by $\mathbf{N}$ and $\pr_{z_\alpha}$, where $z_\alpha$ are standard coordinates on $\mathbb{R}^{N-n-1}$. 

An orientable codimension one submanifold is mean convex (up to change of orientation) if $|\mathbf{H}| >0$. By the work of Colding-Minicozzi \cite{CMgeneric} (see also \cite{Hui90}), the only mean convex self-shrinkers with finite entropy are cylinders $\Gamma=\mathring{\Gamma}^k\times \mathbb{R}^{n-k}$, where $\mathring{\Gamma}$ is either a round shrinking sphere $\mathbb{S}^k_{\sqrt{2k}}$ or an Abresch-Langer curve $\mathring{\Gamma}^1_{a,b}$ (see \cite{AL}). We further decompose $\mathbb{R}^N = \mathbb{R}^{k+1} \times \mathbb{R}^{n-k} \times \mathbb{R}^{N-m-n}$ so that $\mathring{\Gamma}\subset \mathbb{R}^{k+1}$, and let $\mathring{x}, y,z$ be the projection of $x$ to each respective factor. 

Given $\mathring{\Gamma}^k$, we denote by $\mathcal{C}_n(\mathring{\Gamma})$ the set of all rotations of $\mathring{\Gamma} \times \mathbb{R}^{n-k} \subset \mathbb{R}^N$ about the origin.

\subsection{The auxiliary quantities $\tau$ and $P$}

For submanifolds $\Gamma$ on which $\mathbf{H}$ never vanishes, Colding-Minicozzi \cite{CM19} considered the 2-tensor $\tau = \frac{A}{|\mathbf{H}|}$ and showed that $|\nabla^\perp\tau|^2$ satisfies a certain elliptic PDE with inhomogenous term given by
\begin{equation}
\begin{split}
P ={} & |A|^2 |A^\mathbf{N}|^2 - 2|A^2|^2 + \sum_{ijlm} \left(2\langle A_{jl}, A_{im}\rangle \langle A_{lm}, A_{ij}\rangle - \langle A_{ij},A_{ml}\rangle^2\right) \\& +\frac{|A|^2}{4|\mathbf{H}|^2}\left(|A^\mathbf{N}(x^T,\cdot)|^2 - |A(x^T,\cdot)|^2\right).
\end{split}
\end{equation}

Here $A^2$ is the real-valued 2-tensor $(A^2)_{ij} = \langle A_{im}, A_{mj}\rangle$. 

If $\Gamma = \mathring{\Gamma}^k\times \mathbb{R}^{n-k}$ is either a round cylinder or an Abresch-Langer cylinder, one has $\tau = -\frac{1}{k} \mathbf{N} \mathring{g}_{ij}$ and in particular $\nabla^\perp\tau=0$. 

As before, for graphs $\Gamma_U$ there are smooth functions $\mathcal{T}$ and $\mathcal{P}$ so that for sufficiently small $\delta>0$ and $\|U\|_{C^2}<\delta$ we have $|\nabla^\perp \tau|^2_U = \mathcal{T}(p,U,\nabla U,\nabla^2 U)$ and $P_U = \mathcal{P}(p,U,\nabla U,\nabla^2 U)$. 

The quantity $P$ vanishes on submanifolds of codimension one. 

\subsection{Jacobi fields} 

The space of Jacobi fields on a shrinker $\Gamma$ is the ($W^{2,2}$) kernel $\mathcal{K}=\ker L$. It contains the subspace $\mathcal{K}_0$ of Jacobi fields generated by ambient rotations, and we denote its $L^2$-orthocomplement in $\mathcal{K}$ by $\mathcal{K}_1$. 

The following summarises the Jacobi fields on $\Gamma = \mathring{\Gamma}\times \mathbb{R}^{n-k}$, where $\mathring{\Gamma}$ is a round shrinking sphere or an Abresch-Langer curve (see \cite[Sections 2.6 and 4.1]{Z20}). 

\begin{proposition}
\label{prop:jacobi-fields}
Let $\Gamma =\mathring{\Gamma} \times \mathbb{R}^{n-k}$ where $\mathring{\Gamma}$ is a round shrinking sphere or an Abresch-Langer curve. Then the space $\mathcal{K}_0$ is spanned by normal vector fields of the following forms: 
\begin{enumerate}
\item $\mathring{x}_i\pr_{\mathring{x}_j}^\perp - \mathring{x}_j\pr_{\mathring{x}_i}^\perp$; 
\item $y_j \pr_{\mathring{x}_i}^\perp$; 
\item $\mathring{x}_i \pr_{z_\alpha}$; and $y_j \pr_{z_\alpha}$. 
\end{enumerate}
Moreover, the space $\mathcal{K}_1$ is spanned by the normal fields $\{(y_i y_j-2\delta_{ij})\mathbf{H}\}$. 
\end{proposition}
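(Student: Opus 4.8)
The plan is to compute the full Jacobi kernel $\mathcal{K}=\ker L$ directly, exploiting the product structure of $\Gamma=\mathring{\Gamma}^k\times\mathbb{R}^{n-k}$. Since $\mathring{\Gamma}$ is a hypersurface in $\mathbb{R}^{k+1}$ and the $\mathbb{R}^{n-k}$ factor is flat and transverse to the extra directions, the normal bundle splits as $N\Gamma=\mathbb{R}\,\mathbf{N}\oplus\bigoplus_\alpha\mathbb{R}\,\pr_{z_\alpha}$, the second fundamental form is $\mathbf{N}$-valued with only the $\mathring{\Gamma}$--$\mathring{\Gamma}$ block $A_{ab}$ nonzero, and $\nabla^\perp\mathbf{N}=0$ (it is the Gauss map of $\mathring{\Gamma}$, extended trivially in $y$). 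Hence $L$ preserves this splitting: on a section $f\,\pr_{z_\alpha}$ it acts as the scalar operator $\mathcal{L}+\tfrac12$, and on $f\,\mathbf{N}$ as $\mathcal{L}+\tfrac12+|A_{\mathring{\Gamma}}|^2$; in both cases $\mathcal{L}=\mathcal{L}_{\mathring{\Gamma}}+\mathcal{L}_{\mathbb{R}^{n-k}}$ splits further (the Gaussian weight on $\Gamma$ factors as $\rho_{\mathring{\Gamma}}\otimes\rho_{\mathbb{R}^{n-k}}$), so after separating variables $\mathcal{K}$ is read off from the spectra of $\mathcal{L}_{\mathring{\Gamma}}$, of the scalar Jacobi operator $L_{\mathring{\Gamma}}=\mathcal{L}_{\mathring{\Gamma}}+\tfrac12+|A_{\mathring{\Gamma}}|^2$, and of the drift Laplacian on $\mathbb{R}^{n-k}$, whose eigenfunctions are the Hermite polynomials with eigenvalues $-j/2$, $j\geq0$.

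For $\mathcal{K}_0$ I would run through the ambient rotation vector fields $x_a\pr_{x_b}-x_b\pr_{x_a}$ and project them to $N\Gamma$ along $\Gamma$; these are $W^{2,2}$ Jacobi fields since they grow at most linearly. Splitting the coordinates into the $\mathring{x}$-, $y$- and $z$-blocks: rotations internal to the $y$-block or the $z$-block project to zero (tangent to $\Gamma$, respectively vanishing on $\Gamma$ since $z\equiv0$ there); rotations internal to the $\mathring{x}$-block give (1); mixed $\mathring{x}$--$y$ rotations give (2) (the $\pr_{y_j}$ part is tangential); and mixed $\mathring{x}$--$z$ and $y$--$z$ rotations give the two families in (3) (the $z\,\pr$ terms vanish on $\Gamma$). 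On the round sphere (1) is identically zero and (2) becomes $\propto\mathring{x}_i y_j\mathbf{N}$.

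To show these, together with $\{(y_iy_j-2\delta_{ij})\mathbf{H}\}$, exhaust $\mathcal{K}$, I would match the Hermite eigenvalues $-j/2$ against the $\mathring{\Gamma}$-spectra. In the $\pr_{z_\alpha}$-directions one needs a $\tfrac{j-1}{2}$-eigenfunction of $\mathcal{L}_{\mathring{\Gamma}}$; since $\mathcal{L}_{\mathring{\Gamma}}\leq0$ with $0$-eigenspace the constants and $-\tfrac12$-eigenspace $\mathrm{span}\{\mathring{x}_1,\dots,\mathring{x}_{k+1}\}$ (using $\mathcal{L}x_i=-\tfrac12 x_i$ on any shrinker; completeness for $\mathring{\Gamma}$ an Abresch--Langer curve is from \cite{AL,Z20}, and is immediate via spherical harmonics for the sphere), only $j=0$ (giving $\mathring{x}_i\pr_{z_\alpha}$) and $j=1$ (giving $y_j\pr_{z_\alpha}$) occur, i.e. exactly (3). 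In the $\mathbf{N}$-direction one needs a $\tfrac j2$-eigenfunction of $L_{\mathring{\Gamma}}$; the relevant nonnegative eigenvalues are $0$, with eigenfunction the rotational Jacobi field of $\mathring{\Gamma}$ (matching (1) via $j=0$); $\tfrac12$, with eigenfunctions $\langle e_i,\nu\rangle$, i.e. $\pr_{\mathring{x}_i}^\perp=\langle e_i,\nu\rangle\mathbf{N}$ (matching (2) via $j=1$); and $1$, with eigenfunction the scalar mean curvature $H=|\mathbf{H}|$ — here one invokes the Colding--Minicozzi identity $L_{\mathring{\Gamma}}H=H$, which is precisely why the statement carries $\mathbf{H}=H\mathbf{N}$ rather than $\mathbf{N}$ — yielding $(y_iy_j-2\delta_{ij})\mathbf{H}$ via $j=2$. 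Higher $j$ would require an eigenvalue $\geq\tfrac32$, which does not occur. This produces all of $\mathcal{K}$; and since the $\mathcal{K}_1$-candidates carry a degree-$2$ Hermite factor, by orthogonality of Hermite polynomials of distinct degrees they are $L^2$-orthogonal to every element of $\mathcal{K}_0$, so $\mathcal{K}_1$ is as claimed.

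The main obstacle is the \emph{completeness} of the spectral descriptions of $\mathcal{L}_{\mathring{\Gamma}}$ and $L_{\mathring{\Gamma}}$ in the Abresch--Langer case: unlike the sphere, where everything reduces to $\Delta_{\mathbb{S}^k}$, this requires the finer analysis of Abresch--Langer curves and their Jacobi fields, which I would import from \cite{AL} and \cite[Sections 2.6, 4.1]{Z20} rather than redo. The only other point needing care is the bookkeeping that identifies the ``$\mathbf{N}\times(\text{linear Hermite})$'' and ``$\mathbf{N}\times(\text{rotational})$'' solutions with forms (2) and (1), so that what survives as genuinely new is exactly $\mathrm{span}\{(y_iy_j-2\delta_{ij})\mathbf{H}\}$; this, together with the product-space spectral decomposition, is routine.
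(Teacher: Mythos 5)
The paper itself gives no proof of this proposition: it is stated as a summary with the citation \cite[Sections 2.6 and 4.1]{Z20}, so there is no in-paper argument to compare against. Your sketch is a sound and essentially standard separation-of-variables argument, and it matches the structure of the cited reference: split $N\Gamma=\mathbb{R}\mathbf{N}\oplus\bigoplus_\alpha\mathbb{R}\,\pr_{z_\alpha}$ using $\nabla^\perp\mathbf{N}=0$ and that $A$ is $\mathbf{N}$-valued; factor the Gaussian weight; reduce $L$ on the two summands to $\mathcal{L}_{\mathring{\Gamma}}+\tfrac12$ and to the scalar Jacobi operator of $\mathring{\Gamma}$, each tensored with the drift Laplacian on $\mathbb{R}^{n-k}$; separate against Hermite polynomials; and match eigenvalues. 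The eigenvalue bookkeeping ($-\tfrac12,0$ for the drift Laplacian on $\mathring{\Gamma}$ against Hermite degrees $0,1$ giving the $\pr_{z_\alpha}$-families; $0,\tfrac12,1$ for the scalar Jacobi operator against degrees $0,1,2$ giving (1), (2), and the $\mathcal{K}_1$-generator) is correct, as is the observation that $\mathcal{K}_1\perp\mathcal{K}_0$ follows from Hermite orthogonality in $y$.

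One refinement worth recording: for $\mathcal{L}_{\mathring{\Gamma}}$ the ``completeness'' is actually elementary in the Abresch--Langer case, since an eigenspace of a second-order ODE on a closed curve has dimension at most two and the two coordinate functions $\mathring{x}_1,\mathring{x}_2$ already span the $-\tfrac12$-eigenspace, while the $0$-eigenspace is the constants by the variational characterisation. The genuinely non-elementary input, which you correctly flag, concerns the scalar Jacobi operator on $\mathring{\Gamma}$: besides $1$ being the top eigenvalue (which follows from $L_{\mathring{\Gamma}}\kappa=\kappa$, $\kappa>0$, and positivity of the ground state), one must know that the $\tfrac12$-eigenspace is exactly $\spa\{\langle e_1,\nu\rangle,\langle e_2,\nu\rangle\}$ and, crucially, that the kernel is one-dimensional, spanned solely by the rotational field $\dot\kappa$. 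Without the last fact the proposition's description of $\mathcal{K}_1$ would be incomplete, since an extra $j=0$ kernel element would contribute to $\mathcal{K}_1$. That one-dimensionality is the linearised non-degeneracy of Abresch--Langer curves and is the substantive content of the cited spectral analysis in \cite{AL,Z20}, so deferring it is entirely appropriate and consistent with what the paper itself does.
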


\begin{corollary}
\label{cor:jacobi-est}
Let $\Gamma =\mathring{\Gamma} \times \mathbb{R}^{n-k}$ where $\mathring{\Gamma}$ is a round shrinking sphere or an Abresch-Langer curve. Let $r_0 = \diam(\mathring{\Gamma})+1$. There exists $C$ so that for any $J \in \mathcal{K}$ we have $|J| \leq C\langle x\rangle^2 \|J\|_{L^2(B_{r_0})}$, $|\nabla J| + |\nabla^2 J| \leq C\langle x\rangle^2 \|J\|_{L^2(B_{r_0})}$ and $|\nabla^2 J ( \cdot,y)\|  \leq C\langle x\rangle\|J\|_{L^2(B_{r_0})} $. 
\end{corollary}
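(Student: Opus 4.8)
The plan is to deduce Corollary~\ref{cor:jacobi-est} from the explicit description of the Jacobi fields in Proposition~\ref{prop:jacobi-fields} together with standard finite-dimensionality arguments. First I would note that $\mcfK = \ker L$ is finite-dimensional: indeed $\mcfK_0$ is spanned by the $O(\dim)$ explicit fields listed in (1)--(3), and $\mcfK_1$ is spanned by the fields $(y_iy_j - 2\delta_{ij})\Hvec$, so $\dim \mcfK < \infty$ with an explicit basis $\{J_\alpha\}$. On this fixed finite-dimensional space all norms are equivalent, so it suffices to prove the pointwise bounds for each basis element $J_\alpha$ separately and then use linearity, absorbing the (finitely many) comparison constants between $\|\cdot\|_{L^2(B_{r_0})}$ and the coordinates of $J$ in the basis into $C$; here one uses that $\|\cdot\|_{L^2(B_{r_0})}$ is genuinely a norm on $\mcfK$, which follows from unique continuation for the elliptic operator $L$ (a nonzero Jacobi field cannot vanish on the open set $B_{r_0}\cap\Gamma$).

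Next I would verify the three pointwise estimates for each basis field. Every basis field is a linear combination of terms of the shape $q(\mathring{x},y)\,W$ where $q$ is a polynomial of degree at most $2$ in the coordinates and $W$ is one of the bounded normal fields $\mathbf{N} = \Hvec/|\Hvec|$ (equivalently $\pr_{\mathring{x}_i}^\perp$, which on $\mathring{\Gamma}\times\R^{n-k}$ is a smooth bounded field) or $\pr_{z_\alpha}$, or else of the form $(y_iy_j - 2\delta_{ij})\Hvec$ with $\Hvec$ bounded on the cylinder. On the cylinder $\Gamma = \mathring{\Gamma}\times\R^{n-k}$, the factor $\mathring{\Gamma}$ is compact so $\mathring{x}$ is bounded, and the noncompact directions contribute only the coordinate $y\in\R^{n-k}$ with $|y|\le |x|$. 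Hence $|J| \le C(1+|x|^2) = C\langle x\rangle^2$ up to a constant depending only on $\mathring{\Gamma}$ and $n$ (the bound on $\mathring{x}$, $|\Hvec|$, and the ambient normal frame). For the derivative bounds, $\nabla^T$ and $\nabla^\perp$ applied to $q(\mathring{x},y)W$ either differentiate the polynomial $q$ — lowering its degree, so the coordinate growth only improves — or differentiate $W$, which is a fixed smooth bounded tensor field on the cylinder with bounded covariant derivatives; thus $|\nabla J| + |\nabla^2 J| \le C\langle x\rangle^2$ as well. Finally, for $|\nabla^2 J(\cdot, y)|$ one observes that contracting the Hessian against the (tangential part of the) position vector $y$ in a flat direction picks out at most one $y$-derivative of the degree-$\le 2$ polynomial part; schematically $\nabla^2(q W)(\cdot, y) \sim (\pr_y q)\,(\ldots) W + q\,\nabla^2 W(\cdot,y)$, and since $\pr_y q$ has degree $\le 1$ while the second term carries a tangential $y$ that has no $y$-growth after pairing (the cylinder is flat in those directions, so $\nabla^2 W$ vanishes or is bounded there), one gains a full power of $\langle x\rangle$ and obtains $|\nabla^2 J(\cdot,y)| \le C\langle x\rangle \|J\|_{L^2(B_{r_0})}$.

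Concretely I would handle this by checking, field by field in the list of Proposition~\ref{prop:jacobi-fields}: for type (1) the polynomial is $\mathring{x}_i$, bounded, times a field with bounded derivatives, so all three estimates hold with $\langle x\rangle^0$ to spare; for type (2) and (3) the polynomial is linear in $(\mathring{x},y)$ hence $\le C\langle x\rangle$, its $y$-derivative is constant, and the accompanying field $\pr_{\mathring{x}_i}^\perp$ or $\pr_{z_\alpha}$ is parallel in the flat directions, which gives the extra gain for the $(\cdot,y)$ contraction; and for $\mcfK_1$ the polynomial $y_iy_j - 2\delta_{ij}$ is quadratic, giving the $\langle x\rangle^2$ bound for $J$, $\nabla J$, $\nabla^2 J$, while its contraction against $y$ in the Hessian sees $\pr_{y_m}(y_iy_j) = \delta_{im}y_j + \delta_{jm}y_i$ of degree $1$, and $\Hvec$ is parallel along the flat directions, yielding $\langle x\rangle$. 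Summing over the finite basis and using norm equivalence on $\mcfK$ closes the argument.

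The only genuinely nontrivial input is the reduction to a single constant via finite-dimensionality: one must know a priori that $\mcfK$ is finite-dimensional and that $\|\cdot\|_{L^2(B_{r_0})}$ restricts to a norm on it. The former is a standard spectral fact for the drift-stable operator $L$ on shrinkers of finite entropy (or can be read off from the completeness of the list in Proposition~\ref{prop:jacobi-fields}), and the latter is unique continuation for $L$; I expect this bookkeeping — rather than the pointwise polynomial estimates, which are essentially by inspection — to be the main point requiring care, since it is what converts the per-field bounds into the uniform statement claimed.
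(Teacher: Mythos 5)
Your general strategy --- finite-dimensionality of $\mathcal{K}$, explicit basis from Proposition~\ref{prop:jacobi-fields}, norm equivalence, and field-by-field verification --- is exactly the route the paper intends (the corollary is stated without proof as a consequence of the proposition and the discussion in \cite{Z20}). The unique continuation/positivity remark for $\|\cdot\|_{L^2(B_{r_0})}$ and the verification of the first two bounds $|J|,\, |\nabla J|,\, |\nabla^2 J|\le C\langle x\rangle^2$ are all fine.

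However, the argument you give for the third bound $|\nabla^2 J(\cdot,y)|\le C\langle x\rangle$ has a genuine gap. Your schematic
\[
\nabla^2(qW)(\cdot,y)\sim(\partial_y q)(\ldots)W+q\,\nabla^2 W(\cdot,y)
\]
drops the Leibniz cross term $\nabla q\otimes\nabla^\perp W$. When the $y$-slot falls on the $\nabla q$ factor, one gets $\langle\nabla q, y\rangle\,\nabla^\perp W$, and for the quadratic $q=y_iy_j-2\delta_{ij}$ Euler's identity gives $\langle\nabla q, y\rangle=2y_iy_j$ --- this \emph{preserves} the degree rather than lowering it, contrary to your claim that ``$\partial_y q$ has degree $\le 1$'' (which is only true if $\partial_y$ means a single derivative $\partial_{y_m}$, not the contraction against the position vector $y=y_m\partial_{y_m}$). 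Concretely, for $J=(y_iy_j-2\delta_{ij})\mathbf{H}\in\mathcal{K}_1$ a direct computation gives
\[
(\nabla^\perp\nabla^\perp J)(\partial_\sigma,y)=2\,y_iy_j\,\nabla^\perp_\sigma\mathbf{H},
\]
which vanishes for the round sphere (where $\nabla^\perp\mathbf{H}=0$, since $|\mathbf{H}|$ is constant and $\nabla^\perp\mathbf{N}=0$) but is genuinely quadratic on an Abresch--Langer cylinder (where $\nabla^\perp_\sigma\mathbf{H}=\dot\kappa\,\mathbf{N}\neq0$). So your argument as written establishes the third estimate only in the round case; for the Abresch--Langer case you need either to interpret $\nabla^2 J(\cdot,y)$ as evaluation of one slot at a coordinate direction $\partial_{y_m}$ (where the cross term is $(\partial_{y_m}q)\nabla^\perp_\sigma\mathbf{H}$, genuinely linear), or else to account for this term explicitly rather than absorbing it into the ``$(\ldots)$''. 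This is the one place where the computation is not just ``by inspection,'' and it deserves a careful line.
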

 
 \section{Auxiliary variation analysis}
 \label{sec:var}
 
In this section we compute the variation of the auxiliary quantities, insofar as to establish Propositions \ref{prop:dDtauS} and \ref{prop:dDtauAL} for $\nabla^\perp\tau$, and Lemma \ref{lem:dPAL} and Proposition \ref{prop:d2PAL} for $P$. The benefit of using these quantities is that their first variation will be sufficient to establish the formal second order obstruction of $\phi$. 

We consider a submanifold $\Gamma$ with a fixed immersion $X_0 :\Gamma^n \rightarrow \mathbb{R}^N$, and a one-parameter family of immersions $X: I\times \Gamma^n\rightarrow \mathbb{R}^N$ with $X(0,p)=X_0(p)$. We use $s$ for the coordinate on $I=(-\epsilon,\epsilon)$, and subscripts to denote differentiation with respect to $s$. For instance, $X_s = \frac{\pr}{\pr s} X$. If $p_i$ are local coordinates on $\Sigma$, we have the tangent frame $X_i = X_*(\frac{\pr}{\pr p^i})$. 

All geometric quantities such as $\Pi, g,A$ should be considered as functions of $s,p$, given by the value of each quantity at $X(s,p)$ on the submanifold defined by $X(s,\cdot)$. For instance, the metric $g_{ij}(s,p)$ is given by $g_{ij}=\langle X_i, X_j\rangle$. Recall $\Pi$ is the projection to the normal bundle. Also recall that repeated lower indices are contracted via the (inverse) metric $g^{ij}$.

The following first variations were calculated in \cite{CM19}:

\begin{proposition}[\cite{CM19}]
\label{prop:1stvar}
Suppose $X_s=V=V^\perp$; then at $s=0$:

\begin{alignat}{3}
\label{eq:dPi}
&\Pi_s(W) &&=  -\Pi(\nabla_{W^T} V)  - X_j g^{ij}\langle \Pi(\nabla_{X_i} V), W\rangle,\\
\label{eq:dg}
&(g_{ij})_s &&=  -2A^V_{ij}, (g^{ij})_s = 2g^{il} A^V_{lm} g^{mj}, \\
\label{eq:dA}
&(A_{ij})_s &&= -X_l \langle \nabla^\perp_{X_l} V, A_{ij}\rangle + (\nabla^\perp\nabla^\perp V)(X_i,X_j) - A^V_{il} A_{jl},
\\
&|\mathbf{H}|_s &&= -\langle \mathbf{N}, \Lap^\perp V + A^V_{ij}A_{ij}\rangle.
  \end{alignat}
\end{proposition}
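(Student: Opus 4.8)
The plan is to obtain each identity by a direct computation at $s=0$, exploiting two structural facts that hold because $X_s=V$ is purely normal there and the ambient connection on $\mathbb{R}^N$ is flat. First, mixed partials commute, so $(X_i)_s=\partial_{p^i}V=\nabla_{X_i}V$ and $(X_{ij})_s=\partial_{p^i}\partial_{p^j}V$. Second, differentiating $0=\langle V,X_k\rangle$ gives $\langle\nabla_{X_i}V,X_k\rangle=-A^V_{ik}$, hence $\nabla_{X_i}V=\nabla^\perp_{X_i}V-A^V_{il}X_l$; this splitting of the full ambient derivative of $V$ into its normal and tangential parts is the workhorse of the whole computation, and essentially everything else is bookkeeping.

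First I would prove \eqref{eq:dg}: differentiating $g_{ij}=\langle X_i,X_j\rangle$ and inserting $(X_i)_s=\nabla_{X_i}V$ with $V$ normal gives $(g_{ij})_s=-2A^V_{ij}$ at once, and $(g^{ij})_s$ follows from differentiating $g^{ik}g_{kj}=\delta^i_j$. Next, for \eqref{eq:dPi} I would write $\Pi=\id-\Pi^T$ with $\Pi^T(W)=X_i g^{ij}\langle X_j,W\rangle$, differentiate term by term, and substitute the metric variation together with the decomposition of $\nabla_{X_i}V$; the tangential terms produced by $(g^{ij})_s$ and by the tangential components of $\nabla_{W^T}V$ and $\nabla_{X_j}V$ cancel in combination, leaving exactly $\Pi_s(W)=-\Pi(\nabla_{W^T}V)-X_j g^{ij}\langle\Pi(\nabla_{X_i}V),W\rangle$. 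For \eqref{eq:dA} I would start from $A_{ij}=X_{ij}-\Gamma^l_{ij}X_l$, so $(A_{ij})_s=\partial_i\partial_j V-(\Gamma^l_{ij})_s X_l-\Gamma^l_{ij}\nabla_{X_l}V$, and split each term into normal and tangential parts: the normal part reorganises, via the definition of $\nabla^\perp\nabla^\perp V$ and the identity $\langle\nabla_{X_i}V,X_k\rangle=-A^V_{ik}$, into $(\nabla^\perp\nabla^\perp V)(X_i,X_j)-A^V_{il}A_{jl}$, while the tangential part — nonzero precisely because the splitting $\mathbb{R}^N=T\Sigma_s\oplus N\Sigma_s$ itself moves with $s$ — collapses to $-X_l\langle\nabla^\perp_{X_l}V,A_{ij}\rangle$. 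Finally, since $\mathbf{H}=-g^{ij}A_{ij}$, the formula for $|\mathbf{H}|_s$ comes from combining \eqref{eq:dg} and \eqref{eq:dA}: one uses $g^{ij}(\nabla^\perp\nabla^\perp V)(X_i,X_j)=\Lap^\perp V$, observes that the two $\langle A^V,A\rangle$-type terms combine with total coefficient $-1$, and then computes $|\mathbf{H}|_s=\langle\mathbf{H}_s,\mathbf{N}\rangle$; the one leftover term is a multiple of $\langle X_l,\mathbf{N}\rangle=0$ and drops.

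I expect the only real obstacle to be organisational. In \eqref{eq:dPi} and \eqref{eq:dA} the intermediate objects $\partial_i\partial_j V$, $\Pi^T$ and $\Gamma^l_{ij}$ are individually non-tensorial, and only the stated combinations are geometric, so one must carefully verify a handful of cancellations and keep the index raising and lowering straight. A related subtlety worth flagging is that although $A$ takes values in the normal bundle, $(A_{ij})_s$ need not, which is exactly what produces the tangential term in \eqref{eq:dA}; matching its sign and index placement is where most of the care goes. Since these are the standard first-variation identities recorded in \cite{CM19}, I would present the derivation compactly, isolating the decomposition $\nabla_{X_i}V=\nabla^\perp_{X_i}V-A^V_{il}X_l$ and the cancellations it forces as the only substantive points.
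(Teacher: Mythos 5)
Your proof is correct and is the standard direct variational computation; since the paper cites \cite{CM19} for this proposition without reproducing a derivation, there is no in-paper argument to compare against, and your approach is exactly what one expects from that reference. One organizational shortcut worth adding, since you flag the tangential part of $(A_{ij})_s$ as the main bookkeeping hazard: rather than tracking the non-tensorial terms $(\Gamma^l_{ij})_s$ and $\partial_i(A^V_{jl})$, simply differentiate the identity $\langle A_{ij},X_k\rangle=0$ in $s$ to obtain $\langle(A_{ij})_s,X_k\rangle=-\langle A_{ij},(X_k)_s\rangle=-\langle A_{ij},\nabla^\perp_{X_k}V\rangle$, which produces the tangential term $-X_l\langle\nabla^\perp_{X_l}V,A_{ij}\rangle$ in one line and leaves only the manifestly tensorial normal part to be checked against $(\nabla^\perp\nabla^\perp V)(X_i,X_j)-A^V_{il}A_{jl}$.
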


\subsection{Variation of $\nabla^\perp\tau$}
 
We begin with a general submanifold $\Gamma$. Assume that on $\Gamma$, we have $|\mathbf{H}|\neq 0$ on $\Gamma$ (so $\tau$ is well-defined) and $\nabla^\perp\tau=0$. Then at $s=0$, \begin{equation}(|\nabla^\perp\tau|^2)_s =0, \qquad (|\nabla^\perp \tau|^2)_{ss} = 2|(\nabla^\perp\tau)_s|^2.\end{equation} The first variation of $\nabla^\perp \tau$ is given by \begin{equation}\label{eq:dDtau}(\nabla^\perp \tau)_s = \Pi_s (\nabla^\perp\tau) + \Pi(\nabla(\tau_s)).\end{equation} 

Since $\nabla^\perp\tau=0$, we have $\nabla_l \tau_{ij} = - \langle \tau_{ij}, A_{lm}\rangle e_m.$ Using (\ref{eq:dPi}), this gives the following formula for the first term on the right in (\ref{eq:dDtau}),  \begin{equation}\label{eq:dPiDtau}\Pi_s(\nabla_l \tau_{ij}) = \frac{1}{|\mathbf{H}|} \langle A_{ij}, A_{lm}\rangle \nabla_m^\perp V.\end{equation} 

For the last term in (\ref{eq:dDtau}), we calculate \begin{equation}\label{eq:Ddtau}\Pi(\nabla (\tau_s)) = - \frac{\nabla |\mathbf{H}|}{|\mathbf{H}|^2} \Pi(A_s - |\mathbf{H}|_s \tau) + \frac{\nabla^\perp A_s}{|\mathbf{H}|} - \frac{\nabla |\mathbf{H}|_s}{|\mathbf{H}|} \tau.\end{equation} 

The first two terms in (\ref{eq:Ddtau}) are already known from Proposition \ref{prop:1stvar}, and we proceed to calculate the last two: Differentiating $|\mathbf{H}|_s = -\langle \mathbf{N}, \Lap^\perp V + A^V_{ij}A_{ij}\rangle$ gives \begin{equation}
\begin{split}\nabla |\mathbf{H}|_s =& -\langle \nabla^\perp \mathbf{N}, \Lap^\perp V + A^V_{ij}A_{ij}\rangle - \langle \mathbf{N}, \nabla^\perp \Lap^\perp V\rangle \\&-A^{\mathbf{N}}_{ij}( \langle\nabla^\perp A_{ij},V\rangle +\langle A_{ij},\nabla^\perp V\rangle) - A^V_{ij}\langle \mathbf{N}, \nabla^\perp A_{ij}\rangle.\end{split}\end{equation}

Now differentiating $(A_{ij})_s = -X_l \langle \nabla_l^\perp V,A_{ij}\rangle - A^V_{il} A_{jl} + (\nabla^\perp\nabla^\perp V)(X_i,X_j) $, we have \begin{equation}\begin{split}\nabla_l^\perp (A_{ij})_s =& -A_{ml} \langle \nabla_m^\perp V,A_{ij}\rangle - \langle \nabla_l^\perp A_{im},V\rangle A_{jm} - \langle A_{im},\nabla_l^\perp V\rangle A_{jm} \\& - A^V_{im}\nabla_l^\perp A_{jm} + (\nabla^\perp \nabla^\perp \nabla^\perp V)(X_i,X_j,X_l).\end{split}\end{equation}

 In the remainder of this subsection we consider the cases where $\mathring{\Gamma}$ is either: a shrinking sphere $\mathbb{S}^k_{\sqrt{2k}}$; or an Abresch-Langer curve $\mathring{\Gamma}_{a,b}$. In both cases, $\Gamma$ has codimension one and indeed satisfies $\nabla^\perp \tau=0$ and $A=A^\mathbf{N} \mathbf{N}$. Furthermore, $\nabla^\perp\mathbf{N}=0$, and it follows that any normal variation on $\Gamma$ may be written $U= u\mathbf{N} + u^\alpha \pr_{z_\alpha}$, with $\nabla^\perp_i U = (\nabla_i u) \mathbf{N} + (\nabla_i u^\alpha) \pr_{z_\alpha}$, and so forth. 
  
  \subsubsection{Round cylinders}
  
    \begin{proposition}
  \label{prop:dDtauS}
  Let $\mathring{\Gamma} = \mathbb{S}^k_{\sqrt{2k}}$ and $\Gamma \in \mathcal{C}_n(\mathring{\Gamma})$. If $U\in \mathcal{K}_1$, then $\|\mathcal{D}^2\mathcal{T}(U,U)\|_{L^1} = \frac{2}{k^3} \|U\|_{L^2}^2 $.
  \end{proposition}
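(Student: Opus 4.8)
The plan is to evaluate the second variation $\mathcal{D}^2\mathcal{T}(U,U) = (|\nabla^\perp\tau|^2)_{ss}|_0$ along the normal variation $X_s = V = U$, where $U = (y_iy_j - 2\delta_{ij})\mathbf{H}$ spans a piece of $\mathcal{K}_1$ (by Proposition \ref{prop:jacobi-fields}), and integrate over $\Gamma$ with the Gaussian weight. Since $\nabla^\perp\tau = 0$ on $\Gamma$, equation (\ref{eq:dDtau}) tells us that the only contribution is $(\nabla^\perp\tau)_s = \Pi_s(\nabla^\perp\tau) + \Pi(\nabla\tau_s)$, and then $(|\nabla^\perp\tau|^2)_{ss} = 2|(\nabla^\perp\tau)_s|^2$. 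So the first step is simply to assemble $(\nabla^\perp\tau)_s$ explicitly from (\ref{eq:dPiDtau}) and (\ref{eq:Ddtau}), using the special structure on a round cylinder: $A = A^\mathbf{N}\mathbf{N}$ with $A^\mathbf{N}_{ij} = \frac{1}{\sqrt{2k}}\mathring{g}_{ij}$ on the $\mathbb{S}^k$-factor and vanishing on the $\mathbb{R}^{n-k}$-factor, $|\mathbf{H}| = \sqrt{k/2}$ constant, $\nabla^\perp\mathbf{N} = 0$, and $\tau = -\frac{1}{k}\mathbf{N}\mathring{g}_{ij}$. With $|\mathbf{H}|$ constant, the terms in (\ref{eq:Ddtau}) carrying $\nabla|\mathbf{H}|$ drop out, leaving $(\nabla^\perp\tau)_s = \frac{1}{|\mathbf{H}|}\langle A_{ij}, A_{lm}\rangle\nabla_m^\perp V + \frac{\nabla^\perp A_s}{|\mathbf{H}|} - \frac{\nabla|\mathbf{H}|_s}{|\mathbf{H}|}\tau$ — a substantial simplification.

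Next I would plug in the particular field $U = (y_iy_j - 2\delta_{ij})\mathbf{H}$. Because this field is scalar multiple of $\mathbf{H}$ (up to the $y$-dependent coefficient) and lives purely in the $\mathbf{N}$-direction, the normal Laplacian and Hessian of $U$ should be computable rather cleanly: the $y$-derivatives hit the polynomial coefficient $(y_iy_j - 2\delta_{ij})$, and the curvature terms act diagonally through $A = A^\mathbf{N}\mathbf{N}$. The fact that $U \in \ker L$ — i.e., $\mathcal{L} U + \frac{1}{2}U + |A|^2 U = 0$ in the appropriate sense — is exactly what should make $|\mathbf{H}|_s = -\langle\mathbf{N}, \Lap^\perp U + A^U_{ij}A_{ij}\rangle$ reduce to something proportional to the scalar $u = (y_iy_j - 2\delta_{ij})|\mathbf{H}|$, and similarly collapse $\nabla^\perp A_s$. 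I expect that after these substitutions $(\nabla^\perp\tau)_s$ becomes a fixed universal tensor times $\nabla u$ (the only surviving derivative being in the flat $y$-directions), with a clean constant depending only on $k$. Then $|(\nabla^\perp\tau)_s|^2$ is a multiple of $|\nabla u|^2$, and integrating $\int_\Gamma |\nabla u|^2 \rho$ against the Gaussian — using that $u$ is built from Hermite-type polynomials $y_iy_j - 2\delta_{ij}$ whose weighted norms are elementary — should produce exactly $\frac{1}{k^3}\|U\|_{L^2}^2$ after accounting for the factor of $2$ in $(|\nabla^\perp\tau|^2)_{ss} = 2|(\nabla^\perp\tau)_s|^2$ and the normalization $|\mathbf{H}|^2 = k/2$.

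The main obstacle, I anticipate, is bookkeeping in the contraction $\langle A_{ij}, A_{lm}\rangle\nabla_m^\perp V$ and in $\nabla_l^\perp(A_{ij})_s$: one must carefully track which indices range over the spherical factor versus the Euclidean factor, since $A$ is supported only on the former while the $y_iy_j$-coefficient of $U$ differentiates only in the latter. Getting the cross-terms right — and verifying that the pieces of $(\nabla^\perp\tau)_s$ coming from $\Pi_s(\nabla\tau)$, from $\nabla^\perp A_s$, and from $\nabla|\mathbf{H}|_s$ combine (rather than partially cancel or reinforce) into a single clean multiple of $\nabla u$ — is where the real computation lies. A secondary check is that the $L^1$ norm appearing in the statement is genuinely an honest absolute value (no sign cancellation region), which follows because $|(\nabla^\perp\tau)_s|^2 \geq 0$ pointwise; so the $L^1$ norm is literally $\int_\Gamma (|\nabla^\perp\tau|^2)_{ss}\rho = 2\int_\Gamma |(\nabla^\perp\tau)_s|^2\rho$, and the identity is an exact equality, not merely an inequality — which is a useful sanity constraint on the constant.
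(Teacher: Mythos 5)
Your proposal follows essentially the same route as the paper's proof: assemble $(\nabla^\perp\tau)_s$ from \eqref{eq:dDtau}, \eqref{eq:dPiDtau}, \eqref{eq:Ddtau}, exploit the constancy of $|\mathbf{H}|$ and the structure $\mathring{A}_{ij}=-\tfrac{1}{\sqrt{2k}}\mathbf{N}\mathring{g}_{ij}$ to collapse the terms into a multiple of $\nabla u$, square and integrate using the eigenvalue relation for the Hermite coefficient. Two small remarks: first, to obtain the stated exact equality for \emph{all} $U\in\mathcal K_1$ one should work with a general linear combination $u=\sum c_{ij}(y_iy_j-2\delta_{ij})$ as in the paper, not a single basis element; second, the simplifications you attribute to $U\in\ker L$ are more cleanly obtained from the degree-two polynomial form of $u$ (which gives $\nabla^3 u=0$ and $\nabla\Lap u=0$ directly), though the Jacobi relation $(\mathcal L+1)u=0$ is indeed what makes the final integration $\int|\nabla u|^2\rho=\int u^2\rho$ work.
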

  \begin{proof}
  
A shrinking sphere $\mathring{\Gamma} = \mathbb{S}^k_{\sqrt{2k}}$ satisfies $\mathring{A}_{ij} = -\frac{1}{\sqrt{2k}} \mathbf{N} \mathring{g}_{ij}$. Consider a variation by $U=u\mathbf{N}$. As in \cite{CM19}, it follows that at $s=0$ we have 

\[(A_{ij})_s= \frac{\mathring{g}_{ij}}{\sqrt{2k}}\nabla u - \frac{\mathring{g}_{ij}}{2k} u \mathbf{N} + (\nabla^2 u)_{ij}\mathbf{N} ,\]
\[|\mathbf{H}|_s=-\Lap u - \frac{u}{2}.\]

Using the variation formulae above, we may also compute that:

\[\Pi_s(\nabla_l\tau_{ij}) =  \frac{1}{\sqrt{2k^3}} \mathring{g}_{ij}\mathring{g}_{ml} (\nabla_m u) \mathbf{N},\]

\[\nabla_l^\perp (A_{ij})_s = -\frac{1}{2k} \mathring{g}_{ij} \mathring{g}_{ml} (\nabla_m u) \mathbf{N}  -\frac{1}{2k} \mathring{g}_{ij} (\nabla_l u) \mathbf{N} + (\nabla^3 u)_{ijl} \mathbf{N},\] 

\[\nabla_l |\mathbf{H}|_s = -\nabla_l \Lap u -\frac{1}{2}\nabla_l u.\] 

We now specialise to $U\in \mathcal{K}_1$, so that $u=u(y) = \sum_{ij} c_{ij} (y_iy_j-2\delta_{ij})$. In particular $(\mathcal{L}+1)u=0$ and $\nabla^3 u =0$. 

Combining the above according to (\ref{eq:dDtau}), (\ref{eq:dPiDtau}) and (\ref{eq:Ddtau}) then gives \begin{equation}(\nabla^\perp\tau)_{ijl,s} = \sqrt{\frac{2}{k^3}} \mathring{g}_{ij} (\nabla_l u - \mathring{g}_{lm}\nabla_m u) \mathbf{N}\end{equation} and therefore  \begin{equation}\frac{1}{2}(|\nabla^\perp\tau|^2)_{ss} = \frac{1}{2}\mathcal{D}^2\mathcal{T}(U,U) = \frac{2}{k^3} |\nabla u|^2.\end{equation}

Using the identity $\int_\Gamma |\nabla u|^2 \rho = -\int_\Gamma u(\mathcal{L}u)\rho = - \int_\Gamma u^2\rho$ then completes the proof. 

\end{proof}
 
 \subsubsection{Abresch-Langer curves}
 
   \begin{proposition}
  \label{prop:dDtauAL}
  Let $\mathring{\Gamma} = \mathring{\Gamma}_{a,b}$ be an Abresch-Langer curve and $\Gamma \in \mathcal{C}_n(\mathring{\Gamma})$. 
  
  If $U\in \mathcal{K}_1$, then  $\|\mathcal{D}^2\mathcal{T}(U,U)\|_{L^1} \geq 4 B_2(\mathring{\Gamma}) \|U\|_{L^2}^2 $, where $B_2(\mathring{\Gamma}) = \frac{\int_{\mathring{\Gamma}} \kappa^4 \rho_2}{\int_{\mathring{\Gamma}} \kappa^2 \rho_2}>0$. 
  \end{proposition}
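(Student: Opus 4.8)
The plan is to mimic the computation in Proposition \ref{prop:dDtauS}, tracking the geometry of the Abresch-Langer curve $\mathring{\Gamma}_{a,b}$ in place of the round sphere. On $\mathring{\Gamma}_{a,b}$ we have $k=1$, the second fundamental form is $\mathring{A} = -\kappa\,\mathbf{N}$ with $\kappa = |\mathbf{H}|$ the (nowhere-vanishing) geodesic curvature, and the shrinker equation reads $\kappa = \frac{1}{2}\langle \mathring{x}, \mathbf{N}\rangle$. As noted in the excerpt, $\tau = -\mathbf{N}\mathring{g}_{ij}$ (so $\tau$ is constant along $\mathring{\Gamma}$ and $\nabla^\perp\tau = 0$), and $\nabla^\perp \mathbf{N} = 0$. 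So I would take a variation $U = u\mathbf{N}$ with $u = u(y) = \sum_{ij} c_{ij}(y_iy_j - 2\delta_{ij}) \in \mathcal{K}_1$, for which $(\mathcal{L}+1)u = 0$ and all $y$-derivatives of order $\geq 3$ vanish; crucially $u$ has no $\mathring{\sigma}$-dependence, which should keep the curve-direction terms manageable.

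First I would specialize the general first-variation formulae \eqref{eq:dPi}–\eqref{eq:dA}, together with \eqref{eq:dPiDtau} and \eqref{eq:Ddtau}, to this setting. The new feature compared to the sphere is that $\kappa$ is non-constant along $\mathring{\Gamma}$, so $\nabla|\mathbf{H}| = \dot\kappa\, e_{\mathring\sigma}$ no longer vanishes and the term $-\frac{\nabla|\mathbf{H}|}{|\mathbf{H}|^2}\Pi(A_s - |\mathbf{H}|_s\tau)$ in \eqref{eq:Ddtau} genuinely contributes, as does $\nabla|\mathbf{H}|_s$. I would compute $(A_{ij})_s$, $|\mathbf{H}|_s = -\Lap u - \frac{u}{2}$, $\nabla^\perp_l(A_{ij})_s$, and $\nabla_l|\mathbf{H}|_s$ explicitly, splitting indices into the curve direction $\mathring\sigma$ and the $\mathbb{R}^{n-1}$ directions $y_\alpha$. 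Since $u$ depends only on $y$, the Laplacian on $\Gamma$ acts on $u$ purely through the flat $y$-directions, $\Lap u = \Lap_y u$, which simplifies several cross terms. Assembling everything via \eqref{eq:dDtau} I expect the leading contribution to be of the form $(\nabla^\perp\tau)_{s}\big|_{\text{nonzero part}} \sim c\,\kappa^2 (\nabla_y u)\,\mathbf{N}$ modulo terms that either vanish by $\nabla^3_y u = 0$ or combine against the shrinker identity; squaring and integrating then yields $\mathcal{D}^2\mathcal{T}(U,U) \geq c'\kappa^2|\nabla_y u|^2$ pointwise (the inequality rather than equality coming from discarding nonnegative cross terms or using that we only need a lower bound).

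Then I would finish by integration: $\int_\Gamma \kappa^2 |\nabla_y u|^2 \rho$. Factoring $\rho = \rho_1(\mathring{x})\,\rho_{n-1}(y)$ over the product $\mathring{\Gamma}\times\mathbb{R}^{n-1}$, the $\mathring\Gamma$-integral of $\kappa^2$ against $\rho_1$ is a positive constant, while $\int |\nabla_y u|^2 \rho_{n-1} = -\int u(\mathcal{L}_y u)\rho_{n-1} = \int u^2 \rho_{n-1}$ by $(\mathcal{L}_y + 1)u = 0$ and integration by parts, exactly as in the sphere case. To get the stated constant $B_2(\mathring\Gamma) = \frac{\int_{\mathring\Gamma}\kappa^4\rho_2}{\int_{\mathring\Gamma}\kappa^2\rho_2}$ I would need the pointwise coefficient $c'$ to itself carry a factor of $\kappa^2$ (so that the $\mathring\Gamma$-integral is $\int\kappa^4\rho$), which suggests the relevant term in $(\nabla^\perp\tau)_s$ after the dust settles is genuinely $\propto \kappa^2\nabla_y u\,\mathbf{N}$ and the normalization by $\int\kappa^2\rho = F(\mathring\Gamma)$ (up to the $\rho_2$ vs.\ $\rho$ bookkeeping for the $2$-dimensional Gaussian weight) produces the quotient. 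Positivity of $B_2$ is immediate since $\kappa \neq 0$ on the compact curve $\mathring\Gamma$, so both integrals are finite and strictly positive.

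The main obstacle I anticipate is bookkeeping the curve-direction terms: unlike the round sphere, where every derivative of $\kappa$ and of the frame vanishes, on $\mathring\Gamma_{a,b}$ one has nontrivial $\dot\kappa$, $\ddot\kappa$, and the Abresch–Langer ODE $\ddot\kappa = \kappa^2(\tfrac12 - \kappa^2) + \dot\kappa^2/\kappa$ (or an equivalent form) relating them, together with $\nabla_{e_{\mathring\sigma}} e_\alpha = 0$ but $\nabla_{e_{\mathring\sigma}} e_{\mathring\sigma} = \kappa\mathbf{N}$ and $\nabla^\perp_{e_{\mathring\sigma}}\mathbf{N} = 0$. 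Correctly cancelling the $\dot\kappa$- and $\ddot\kappa$-terms — or showing they contribute only nonnegatively to $|\cdot|^2$ after squaring, which is all that is needed for the claimed inequality — is where the care lies. I would organize this by first showing the $\mathring\sigma\mathring\sigma$-component and the mixed $\mathring\sigma\alpha$-components of $(\nabla^\perp\tau)_s$ are lower-order (or sign-definite) and isolating the $\alpha\beta l$-components, which are the ones that produce the $\kappa^2\nabla_y u$ term exactly parallel to the sphere calculation.
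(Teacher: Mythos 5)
Your overall strategy — specialize the variation formulae to $\Gamma=\mathring{\Gamma}_{a,b}\times\mathbb{R}^{n-1}$, assemble $(\nabla^\perp\tau)_s$ via \eqref{eq:dDtau}--\eqref{eq:Ddtau}, square, and integrate using the product structure of $\Gamma$ and the identity from $(\mathcal{L}_y+1)f=0$ — is the same as the paper's. However, there is a genuine gap at the very start: you take $U=u\mathbf{N}$ with $u=u(y)=\sum_{ij}c_{ij}(y_iy_j-2\delta_{ij})$, independent of the arclength coordinate $\mathring{\sigma}$, and you lean on this ($\nabla^3 u=0$, $(\mathcal{L}+1)u=0$, ``$u$ has no $\mathring{\sigma}$-dependence'') to keep the curve-direction terms under control. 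By Proposition \ref{prop:jacobi-fields}, $\mathcal{K}_1$ is spanned by $(y_iy_j-2\delta_{ij})\mathbf{H}$, and on an Abresch--Langer cylinder $\mathbf{H}=\kappa\mathbf{N}$ with $\kappa$ \emph{non-constant} along $\mathring{\Gamma}$; so the correct coefficient is $u=f(y)\kappa(\mathring{\sigma})$ with $(\mathcal{L}_y+1)f=0$, not $u=u(y)$. Indeed, your $u=u(y)$ satisfies $Lu=(\kappa^2-\tfrac12)u$, which vanishes only when $\kappa^2\equiv\tfrac12$ (the round case), so it is not even a Jacobi field on $\mathring{\Gamma}_{a,b}\times\mathbb{R}^{n-1}$ and the hypothesis $U\in\mathcal{K}_1$ is violated.

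This is not a bookkeeping matter you can patch afterwards: with $u=f\kappa$, the curve-direction derivatives are genuinely nonzero ($\nabla_0 u=f\dot{\kappa}$, $\nabla^3 u\ne 0$, $\Lap u=\ddot{\kappa}f+\kappa\Lap_y f$), and the paper's computation depends on these contributions cancelling against the $-\kappa^{-1}\nabla_l\Lap u$ and $\kappa^{-1}(\nabla^3 u)_{00l}$ terms coming from $\nabla|\mathbf{H}|_s$ and $\nabla^\perp(A_{ij})_s$, ultimately using that $\kappa$ is the eigenfunction $L_{\mathring{\Gamma}}\kappa=\kappa$. Only after these cancellations does one get the clean $(\nabla^\perp\tau)_{00l,s}=-2\kappa^2(\nabla_l f)\mathbf{N}$ for $l>0$, hence $|(\nabla^\perp\tau)_s|^2\ge 4\kappa^4|\nabla f|^2$, which factors over $\mathring{\Gamma}\times\mathbb{R}^{n-1}$ to give precisely the ratio $B_2(\mathring{\Gamma})=\int_{\mathring{\Gamma}}\kappa^4\rho_2\big/\int_{\mathring{\Gamma}}\kappa^2\rho_2$ against $\|U\|_{L^2}^2=\int\kappa^2 f^2\rho$. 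Your final integration step and the appearance of the $\kappa^4/\kappa^2$ ratio were correctly anticipated, but the route to the pointwise bound requires the $u=f\kappa$ ansatz and the attendant cancellations, which your proposal does not carry out.
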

  \begin{proof}
 
The curve $\mathring{\Gamma} = \mathring{\Gamma}_{a,b}$ satisfies $\mathring{A}_{ij} = -\kappa \mathbf{N} \mathring{g}_{ij}$, where $\kappa$ is the geodesic curvature. Consider a variation by $U=u\mathbf{N}$. It follows that at $s=0$ we have:

\[\Pi_s(\nabla_l\tau_{ij}) = \kappa \mathring{g}_{ij}\mathring{g}_{ml} (\nabla_m u) \mathbf{N},\]

\[(A_{ij})_s = \kappa \mathring{g}_{ij} \nabla u - \kappa^2 u \mathring{g}_{ij} \mathbf{N} + (\nabla^2u)_{ij} \mathbf{N},\]

\[\nabla_l^\perp (A_{ij})_s = -\kappa^2 \mathring{g}_{ij} \mathring{g}_{ml} (\nabla_m u) \mathbf{N} - 2\kappa \mathring{g}_{ij} u (\nabla_l \kappa) \mathbf{N} - \kappa^2 \mathring{g}_{ij} \nabla_l u \mathbf{N} + (\nabla^3 u)_{ijl} \mathbf{N},\] 

\[|\mathbf{H}|_s =-\Lap u - \kappa^2 u, \qquad \nabla_l |\mathbf{H}|_s = -\nabla_l \Lap u - 2\kappa u\nabla_l \kappa - \kappa^2\nabla_l u.\] 

We now specialise to $U\in \mathcal{K}_1$, so that $u= f(y)\kappa$ and $f(y) = \sum_{ij} c_{ij} (y_iy_j-2\delta_{ij})$; in particular note that $(\mathcal{L}+1)f=0$. 

Take coordinates on $\Gamma$ so that the $i=0$ index corresponds to the arclength parameter on $\mathring{\Gamma}$ and the remaining indices $i>0$ correspond to the standard coordinates on $\mathbb{R}^{n-1}$. Then the metric on $\mathring{\Gamma}$ satisfies $\mathring{g}_{ij} = \delta_{ij}$. 

Combining the above according to (\ref{eq:dDtau}), (\ref{eq:dPiDtau}) and (\ref{eq:Ddtau}) then gives, for $i=j=0$ and $l>0$, that \begin{align*}(\nabla^\perp\tau)_{00l,s} &= (-2\kappa \nabla_l u - \kappa^{-1}\nabla_l \Lap u + \kappa^{-1} (\nabla^3 u)_{00l})\mathbf{N} \\&= (-2\kappa^2 \nabla_l f - \kappa^{-1}\nabla_l (\ddot{\kappa}f + 2\kappa)+\kappa^{-1} \ddot{\kappa} \nabla_l f)\mathbf{N} \\&= -2\kappa^2 (\nabla_l f) \mathbf{N}.\end{align*} Therefore \begin{equation}\frac{1}{2}(|\nabla^\perp\tau|^2)_{ss} = \frac{1}{2}\mathcal{D}^2\mathcal{T}(U,U) = |(\nabla^\perp\tau)_s|^2 \geq 4\kappa^4|\nabla f|^2.\end{equation}

Again using the identity $\int_\Gamma |\nabla f|^2 \rho = -\int_\Gamma f(\mathcal{L}f)\rho = - \int_\Gamma f^2\rho$ completes the proof. 

\end{proof}

\subsection{Variation of $P$}

The first and second variations of $P$ were studied in \cite{CM19} for the case of round cylinders. Here we consider the case where $\mathring{\Gamma}$ is an Abresch-Langer curve. Again we write normal variations as $U= u\mathbf{N} + u^\alpha \pr_{z_\alpha}$. 

\begin{lemma} 
\label{lem:dPAL}
$\mathcal{D} \mathcal{P}=0$ on $\Gamma = \mathring{\Gamma}_{a,b}\times \mathbb{R}^{n-1}$. 
\end{lemma}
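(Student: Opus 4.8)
The goal is to show that the first variation $\mathcal{D}\mathcal{P}$ vanishes identically on the Abresch--Langer cylinder $\Gamma = \mathring{\Gamma}_{a,b}\times\mathbb{R}^{n-1}$, for an arbitrary normal variation $U = u\mathbf{N} + u^\alpha\partial_{z_\alpha}$. The cleanest route is to exploit the fact that $P$ vanishes identically on every submanifold of codimension one, as recorded in the excerpt. So the plan is: (i) show that to first order the variation $X_s = U$ keeps us within the realm of codimension-one geometry, in a sense that makes the vanishing of $P$ visible at first order; and failing that, (ii) fall back on a direct computation using the first variation formulae of Proposition \ref{prop:1stvar}, expanding each building block of $P$ and checking the linear term cancels. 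Step (i) is delicate because an arbitrary $u^\alpha\partial_{z_\alpha}$ component does push $\Gamma$ out of the hyperplane $\mathbb{R}^{k+1}\times\mathbb{R}^{n-k}$; but the point is that the $z$-directions are \emph{flat}, so the bending they introduce is second order in $U$. Concretely, one observes that at $s=0$ the normal bundle splits as $\spa(\mathbf{N})\oplus\spa(\partial_{z_\alpha})$, that $A = A^{\mathbf{N}}\mathbf{N}$ with $\nabla^\perp\mathbf{N} = 0$, $\nabla^\perp\tau = 0$, and that $\langle A_{ij},\partial_{z_\alpha}\rangle = 0$; one then argues that the first-order change in the ``$z$-part'' of $A$ does not enter $\mathcal{D}\mathcal{P}$ because $P$ is quadratic in the obstruction to codimension one.

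\textbf{Reduction to the $u\mathbf{N}$ variation.} Since $\mathcal{P}$ is a smooth functional of $(p,U,\nabla U,\nabla^2 U)$ and its linearization is linear in $U$, it suffices to treat separately $U = u\mathbf{N}$ and $U = u^\alpha\partial_{z_\alpha}$. For the pure $z$-variation $U = u^\alpha\partial_{z_\alpha}$: here $X_s = U$ is an \emph{ambient translation-like} deformation in flat directions, under which the image submanifold remains, to first order, a graph over $\mathring{\Gamma}\times\mathbb{R}^{n-1}$ still of codimension one type in the relevant sense — more precisely, one checks from \eqref{eq:dPi}--\eqref{eq:dA} that $(A_{ij})_s$ acquires only a $\partial_{z_\alpha}$-component (no new $\mathbf{N}$-component beyond what pairs against the already-present $A^{\mathbf{N}}$), so every term of $P$, each of which is a contraction of at least two factors that must ``detect'' a second independent normal direction, vanishes to first order. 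For the pure $u\mathbf{N}$-variation, the deformed surface stays inside $\mathbb{R}^{k+1}\times\mathbb{R}^{n-k}$ exactly (not just to first order), hence $P_U\equiv 0$ for all $s$, and in particular $\mathcal{D}\mathcal{P}(u\mathbf{N}) = \frac{d}{ds}\big|_0 P_U = 0$. The first case is the one that requires care.

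\textbf{Executing the $z$-variation computation.} For $U = u^\alpha\partial_{z_\alpha}$ I would use the frame adapted as in the proof of Proposition \ref{prop:dDtauAL} (index $0$ the arclength direction on $\mathring{\Gamma}$, indices $>0$ the Euclidean $\mathbb{R}^{n-1}$ directions, $\mathring{g}_{ij}=\delta_{ij}$), write out $(A_{ij})_s$, $(g^{ij})_s = 2A^V_{lm}g^{il}g^{mj} = 0$ (since $A^V = \langle A,U\rangle = 0$ as $A\perp\partial_{z_\alpha}$), $|\mathbf{H}|_s = -\langle\mathbf{N},\Delta^\perp U\rangle = 0$ likewise, and then note that $\nabla^\perp U = (\nabla_i u^\alpha)\partial_{z_\alpha}$ and $(\nabla^\perp\nabla^\perp U)(X_i,X_j) = (\nabla^2 u^\alpha)_{ij}\partial_{z_\alpha}$ are entirely in the $z$-normal directions. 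Hence $(A_{ij})_s = (\nabla^2 u^\alpha)_{ij}\partial_{z_\alpha}$, with no $\mathbf{N}$-component. Plugging this into the formula for $P$ and differentiating term by term, each summand of $P$ is a contraction involving products like $\langle A_{ij},A_{ml}\rangle$ or $|A^{\mathbf{N}}|^2$ or $A(x^T,\cdot)$; its $s$-derivative at $0$ always pairs a $z$-directed $(A_{ij})_s$ against an $\mathbf{N}$-directed $A_{kl}$ (giving zero), so $\mathcal{D}\mathcal{P}(u^\alpha\partial_{z_\alpha}) = 0$.

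\textbf{Main obstacle.} The crux is the term $\sum 2\langle A_{jl},A_{im}\rangle\langle A_{lm},A_{ij}\rangle - \langle A_{ij},A_{ml}\rangle^2$ and the bracketed $|A^{\mathbf{N}}(x^T,\cdot)|^2 - |A(x^T,\cdot)|^2$ in the last line: one must verify that, even though $|A|^2$ and $|A^{\mathbf{N}}|^2$ individually \emph{do} change to first order under $U = u^\alpha\partial_{z_\alpha}$ through the metric variation... wait, $(g^{ij})_s = 0$ here so actually these are stable too, and $|A(x^T,\cdot)|^2 - |A^{\mathbf{N}}(x^T,\cdot)|^2 = 0$ both before and after to first order since $A_s$ is $z$-directed. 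The real bookkeeping hazard is simply keeping the index contractions and the $\mathbf{N}$ versus $\partial_{z_\alpha}$ projections straight, and confirming there is no cross term of the form $\langle (A_{ij})_s,\partial_{z_\alpha}\rangle\langle A_{kl},\partial_{z_\alpha}\rangle$ sneaking in — but $A_{kl}\perp\partial_{z_\alpha}$ on $\Gamma$ kills precisely these. I expect the argument to reduce to the one-line observation that $P$ is (schematically) a sum of ``commutator-type'' quadratic expressions in $A$ that vanish to \emph{second} order in the codimension, so its linearization at a codimension-one configuration vanishes.
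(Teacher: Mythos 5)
Your argument is correct, but it takes a genuinely different route than the paper's. The paper proves Lemma~\ref{lem:dPAL} by direct computation: it uses Proposition~\ref{prop:1stvar} to write out the first variation of each building block of $P$ (namely $(|A|^2)_s$, $(|A^\mathbf{N}|^2)_s$, $(|A^2|^2)_s$, $(\langle A_{ij},A_{ml}\rangle^2)_s$, etc.) for a general normal variation $U = u\mathbf{N}+u^\alpha\partial_{z_\alpha}$, and then verifies by hand that the four leading terms cancel and that $(|A^\mathbf{N}(x^T,\cdot)|^2)_s = (|A(x^T,\cdot)|^2)_s$ using $\langle\mathbf{N}_s,\mathbf{N}\rangle = 0$. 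You instead split the variation by linearity. For $U=u\mathbf{N}$ you observe that the straight-line deformation $X_0 + s u\mathbf{N}$ stays inside $\mathbb{R}^{k+1}\times\mathbb{R}^{n-k}$, hence has codimension one, so $P_{sU}\equiv 0$ for all $s$ and the linearization is trivially zero; for $U=u^\alpha\partial_{z_\alpha}$ you observe that $A^V=0$ forces $(g^{ij})_s=0$, $|\mathbf{H}|_s=0$, and that $(A_{ij})_s$, $\mathbf{N}_s$, and (one should add) $(A(x^T,\cdot))_s$ all lie in the flat $z$-directions, so every contraction appearing in $\mathcal{D}\mathcal{P}$ pairs a $z$-directed first variation against the $\mathbf{N}$-directed background $A$ and vanishes. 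This is a more structural argument: it isolates the genuine reason for the vanishing, namely that the codimension-one condition is preserved to first order in both factors of the normal bundle, and it transfers verbatim to the round cylinder. What the paper's computation buys, and yours does not, is the explicit list of intermediate first-variation formulae; what yours buys is a proof that requires no bookkeeping. Two small points where your sketch is incomplete and should be filled in: you should note that $\mathbf{N}_s = \mathbf{H}_s/|\mathbf{H}|$ is $z$-directed for the $u^\alpha$-variation (this follows from $|\mathbf{H}|_s=0$ and $\mathbf{H}_s = -(\Delta u^\alpha)\partial_{z_\alpha}$), since $\mathbf{N}_s$ enters through $A^\mathbf{N}$; and you should verify that $(x^T)_s$ does not spoil the argument in the last term of $P$ — it doesn't, because $\langle\partial_{z_\alpha},x\rangle = z_\alpha = 0$ on $\Gamma$ makes $(A(x^T,\cdot))_s$ purely $z$-directed as well.
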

\begin{proof}
The variation of the first four terms of $P$ proceeds similarly to the round cylinder case in \cite[Section 5.3]{CM19}, so we only list the results of some key calculations. The basic ingredients are (evaluated at $s=0$):

\begin{equation}\langle A_{ij}, A_{ml} \rangle = \kappa^2 \mathring{g}_{ij}\mathring{g}_{ml},  \qquad
\langle (A_{ij})_s, A_{ml}\rangle = -\kappa(u_{ij}-\kappa^2 \mathring{g}_{ij} u) \mathring{g}_{ml},\end{equation}
\begin{equation}(g^{ij})_s = 2A^V_{ij} -2\kappa u\mathring{g}^{ij},\end{equation}
\begin{equation}-\mathbf{H}_s = \kappa\nabla u + (\Lap u + \kappa^2 u)\mathbf{N} + (\Lap u^\alpha) \pr_{z_\alpha}, \qquad
\mathbf{N}_s = -\nabla u-\frac{1}{\kappa}(\Lap u^\alpha)\pr_{z_\alpha}.\end{equation}

Combining these as in \cite{CM19} gives

\[(|A|^2)_s = -2\kappa(\ddot{u} + \kappa^2u),\]
\[\langle A_{ij}, \mathbf{N}\rangle_s = u_{ij} - \kappa^2 \mathring{g}_{ij} u,\qquad
(|A^\mathbf{N}|^2)_s = -2\kappa(\ddot{u} + \kappa^2u),\] 
\[(A^2_{ij})_s = -\kappa(u_{im}\mathring{g}_{mj} + u_{mj} \mathring{g}_{im}), \qquad 
(|A^2|^2)_s = -4\kappa^3(\ddot{u}+\kappa^2u),\] 
\[(\langle A_{ij}, A_{ml}\rangle^2)_s = -4\kappa^3(\ddot{u}+\kappa^2 u),\]
 \[(\langle A_{jl},A_{im}\rangle\langle A_{lm},A_{ij}\rangle)_s = -4\kappa^3(\ddot{u}+\kappa^2 u).\] 
 
Since $|A|^2 = |A^\mathbf{N}|^2 = \kappa^2$, from the above one can see that the first variation of the first four terms of $P$ cancels to zero. It remains to check the variation of the last term \[\frac{|A|^2}{4|\mathbf{H}|^2}(|A^\mathbf{N}(x^T,\cdot)|^2) - |A(x^T,\cdot)|^2).\] Since $A = A^\mathbf{N} \mathbf{N}$, the factor in brackets vanishes at $s=0$, so it is enough to show that its variation is zero too. 

Indeed, we find that $(A^\mathbf{N}(x^T,\cdot))_s = \langle (A(x^T,\cdot))_s, \mathbf{N}\rangle + \langle A(x^T,\cdot), \mathbf{N}_s\rangle = \langle (A(x^T,\cdot))_s, \mathbf{N}\rangle$ since $\langle \mathbf{N}_s ,\mathbf{N}\rangle=0$. Therefore we have \[(|A^\mathbf{N}(x^T,\cdot)|^2)_s = 2 \langle (A^\mathbf{N}(x^T,\cdot))_s , A^\mathbf{N}(x^T,\cdot)\rangle = 2 \langle (A(x^T,\cdot))_s,  A(x^T,\cdot)\rangle =(|A(x^T,\cdot)|^2)_s.\] This completes the proof. 
\end{proof}

Using Lemma \ref{lem:dPAL}, the same proof as \cite[Corollary 5.36]{CM19} (using that $\mathring{\Gamma}$ and its variations by $\mathcal{K}_1$ have codimension one) now gives 
\begin{proposition}
\label{prop:d2PAL}
Let $\Gamma = \mathring{\Gamma}_{a,b}\times \mathbb{R}^{n-1}$. 
If $V\in N\Gamma$ with $LV=0$ and $\|V\|_{W^{2,2}}<\infty$ then $(\mathcal{D}^2 \mathcal{P})(V,V)=0$. 
\end{proposition}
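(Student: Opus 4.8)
The plan is to reduce Proposition \ref{prop:d2PAL} to the vanishing of the second variation of $\mathcal{P}$ on codimension-one variations, exactly as in \cite[Corollary 5.36]{CM19}, using Lemma \ref{lem:dPAL} as the new input that replaces the analogous first-order vanishing for round cylinders. The key structural observation is that $P$ vanishes identically on any submanifold of codimension one (stated in the excerpt), and that the cylinder $\Gamma = \mathring{\Gamma}_{a,b}\times\mathbb{R}^{n-1}$ together with all its variations generated by Jacobi fields $V\in\mathcal{K}_1$ stays within codimension one: indeed, by Proposition \ref{prop:jacobi-fields} the space $\mathcal{K}_1$ is spanned by fields of the form $(y_iy_j-2\delta_{ij})\mathbf{H}$, which are multiples of $\mathbf{H}$ and hence point in the single normal direction $\mathbf{N}$, so the graph $\Gamma_{sV}$ remains inside $\mathbb{R}^{2}\times\mathbb{R}^{n-1}$ (up to the trivial $\mathbb{R}^{N-n-1}$ factor) for all small $s$.

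First I would set up the one-parameter family $X(s,\cdot)$ with $X_s = V = V^\perp$, $LV = 0$, $\|V\|_{W^{2,2}}<\infty$, and write $P_{sV} = \mathcal{P}(p, sV, s\nabla V, s\nabla^2 V)$ as a smooth function of $s$. Since $V\in\mathcal{K}_1$ is a multiple of $\mathbf{N}$, the deformed submanifold $\Gamma_{sV}$ has codimension one for all $s$ in a neighbourhood of $0$, so $P_{sV}\equiv 0$ as a function of $s$. Differentiating twice at $s=0$ gives $0 = \frac{d^2}{ds^2}\big|_{0} P_{sV} = (\mathcal{D}^2\mathcal{P})(V,V) + (\mathcal{D}\mathcal{P})(V_{ss})$, where $V_{ss}$ denotes the second $s$-derivative of the deformation field; but the family may be chosen with $X_{ss} = 0$ (constant-speed normal deformation), or more robustly one invokes Lemma \ref{lem:dPAL}, which asserts $\mathcal{D}\mathcal{P} = 0$ on all normal variations of $\Gamma$, so the term $(\mathcal{D}\mathcal{P})(V_{ss})$ vanishes regardless. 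This forces $(\mathcal{D}^2\mathcal{P})(V,V) = 0$.

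The one point requiring a little care — and the step I expect to be the only genuine obstacle — is the bookkeeping of how $\mathcal{D}^2\mathcal{P}$ relates to $\frac{d^2}{ds^2}P_{sV}$: the functional $\mathcal{P}$ depends on the full jet $(U,\nabla U,\nabla^2 U)$, and along a path $X(s,\cdot)$ one picks up not only $(\mathcal{D}^2\mathcal{P})(V,V)$ but also a first-order term contracted with the acceleration of the path in jet space. Following \cite{CM19} verbatim, one resolves this by noting that $\mathcal{D}\mathcal{P}=0$ on $\Gamma$ (Lemma \ref{lem:dPAL}) kills precisely that extra term, so that for \emph{any} admissible family with velocity $V$ the quantity $\frac{d^2}{ds^2}\big|_0 P_{sV}$ agrees with $(\mathcal{D}^2\mathcal{P})(V,V)$; combined with the codimension-one rigidity the conclusion follows. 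I would present this as: "Since $\mathcal{D}\mathcal{P}=0$ on $\Gamma$ by Lemma \ref{lem:dPAL}, the second variation $(\mathcal{D}^2\mathcal{P})(V,V)$ equals $\frac{d^2}{ds^2}\big|_{s=0} P_{\Gamma_{sV}}$; as $\Gamma_{sV}$ has codimension one for all small $s$ (Proposition \ref{prop:jacobi-fields}), $P_{\Gamma_{sV}}\equiv 0$, and so $(\mathcal{D}^2\mathcal{P})(V,V)=0$," which is the same line of argument as \cite[Corollary 5.36]{CM19} with the Abresch–Langer geometry substituted for the round sphere.
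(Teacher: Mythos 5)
Your argument takes essentially the same route as the paper: the paper's one-line proof defers to Lemma~\ref{lem:dPAL} together with \cite[Corollary 5.36]{CM19}, and you fill in the two mechanisms that proof relies on — that graphs $\Gamma_{sV}$ over $\mathcal{K}_1$-directions stay codimension one so $P_{sV}\equiv 0$, and that the first-variation vanishing of Lemma~\ref{lem:dPAL} ensures $\tfrac{d^2}{ds^2}\big|_0 P_{sV}$ coincides with $(\mathcal{D}^2\mathcal{P})(V,V)$. One small caveat: both you and the paper's parenthetical mention only $\mathcal{K}_1$, whereas the statement (and its downstream use in Proposition~\ref{prop:Pest}) requires all $V\in\mathcal{K}=\mathcal{K}_0\oplus\mathcal{K}_1$; the $\mathcal{K}_0$ part and the cross terms require the companion observation that ambient rotations preserve codimension one (so $P$ vanishes along the rotated cylinder), with Lemma~\ref{lem:dPAL} again killing the first-order acceleration term — this is part of what \cite[Corollary 5.36]{CM19} supplies and you do cite it, so the gap is only in the explicit write-up, not the substance.
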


\begin{remark}
One may also check directly that $\mathcal{D}^2\mathcal{P}(V,V)=0$ for $V\in \mathcal{K}$ using the second variation formulae in \cite[Section 3]{Z20}.
\end{remark}

\section{Estimates for entire graphs}
\label{sec:entire}

We now proceed to prove estimates for entire graphs using the auxiliary quantities. The setup for this subsection follows that of Section \ref{sec:entire}. In particular we consider a normal variation fields $U$ with compact support over a cylinder $\Gamma = \mathring{\Gamma}\times \mathbb{R}^{n-k}$ with $\|U\|_{C^2}<\delta$. 

Set $|V|_{m} =\sum_{j\leq m} |\nabla^j V|$, so that $\|V\|_{W^{m,q}}^q = \int_\Gamma |V|_m^q \rho$. 

First, we note the following crude bounds to be used for Taylor expansion.

\begin{lemma}\label{lem:C3tauP}
Let $\mathring{\Gamma}$ be a compact shrinker with $|\mathring{\mathbf{H}}|>0$. If $\Gamma = \mathring{\Gamma}\times \mathbb{R}^{n-k}$, then there exists $C,\delta$ such that for any vector field $U$ on $\Gamma$ with $\|U\|_{C^2} <\delta$, we have $\|\mathcal{T}\|_{C^3} \leq C$ and $\|\mathcal{P}\|_{C^3} \leq C\langle x\rangle^2$. 
  \end{lemma}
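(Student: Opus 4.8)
\textbf{Proof proposal for Lemma \ref{lem:C3tauP}.}

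The plan is to treat $\mathcal{T}$ and $\mathcal{P}$ as explicit smooth functions of the $1$-jet data $(p, U, \nabla U, \nabla^2 U)$ and to track the $x$-dependence of the coefficients, using that the underlying cylinder $\Gamma = \mathring{\Gamma}\times\mathbb{R}^{n-k}$ has a product structure in which all intrinsic and extrinsic geometry of the $\mathring{\Gamma}$-factor is bounded (since $\mathring{\Gamma}$ is compact with $|\mathring{\mathbf{H}}|>0$), while the $\mathbb{R}^{n-k}$-factor is flat and contributes growth only through the position vector $x$. First I would recall that for $\|U\|_{C^2}<\delta$ the graph $\Gamma_U$ has $|\mathbf{H}_U|$ bounded away from zero (by continuity from $|\mathring{\mathbf{H}}|>0$ and compactness, shrinking $\delta$ if necessary), so $\tau_U = A_U/|\mathbf{H}_U|$ is well-defined and depends smoothly on the $2$-jet of $U$ via the standard graph formulae for the induced metric, its inverse, the second fundamental form and the mean curvature; hence $\mathcal{T} = |\nabla^\perp\tau|^2_U$ and $\mathcal{P} = P_U$ are genuinely smooth functions of $(p, U, \nabla U, \nabla^2 U)$ on the region $\{\|U\|_{C^2}<\delta\}$, and one may differentiate them up to order $3$ in the jet variables.

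Next I would establish the $x$-dependence of the coefficient functions. The quantity $\tau$ is scale-invariant of weight $0$ (it is $A$ divided by $|\mathbf{H}|$, both of weight $-1$ in length), and on the cylinder the only source of unboundedness is the term $\frac{|A|^2}{4|\mathbf{H}|^2}\bigl(|A^\mathbf{N}(x^T,\cdot)|^2 - |A(x^T,\cdot)|^2\bigr)$ appearing in $P$, which carries two explicit factors of $x^T$ and hence is $O(\langle x\rangle^2)$; all other terms in $P$, as well as $\tau$ and its covariant derivative $\nabla^\perp\tau$ (which also involves $\nabla A$, $\nabla|\mathbf{H}|$ — again weight-homogeneous and built from bounded cylinder data plus the jet of $U$), have coefficients bounded uniformly in $x$. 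So $\mathcal{T}$ has all coefficients bounded, giving $\|\mathcal{T}\|_{C^3}\le C$, while $\mathcal{P}$ has coefficients bounded by $C\langle x\rangle^2$, giving $\|\mathcal{P}\|_{C^3}\le C\langle x\rangle^2$; here the $C^3$ norm is taken in the jet variables $(U,\nabla U,\nabla^2 U)$ over the ball $\|U\|_{C^2}<\delta$, uniformly in $p$. Concretely this is a matter of writing out $g_{ij}^U = \mathring{g}_{ij} + (\text{quadratic in }\nabla U) + \ldots$, $A_{ij}^U = A_{ij} + \nabla^\perp\nabla^\perp U + (\text{lower order})$, expanding $(|\mathbf{H}_U|)^{-1}$ in a geometric series (legitimate since $|\mathbf{H}_U|$ is bounded below), and substituting into the formulae for $\tau$ and $P$; since $\mathring{\Gamma}$ is compact all the base geometric tensors and their covariant derivatives that appear are bounded, and the only place $x$ enters algebraically (rather than through bounded data) is the $x^T$ in the last term of $P$.

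I would organize the write-up as: (i) smoothness and the lower bound $|\mathbf{H}_U|\ge c>0$, fixing $\delta$; (ii) the graph expansions for $g^U, (g^U)^{-1}, A^U, |\mathbf{H}_U|, \mathbf{N}_U, \nabla^\perp$ and the resulting formula for $\tau_U$ and $\nabla^\perp\tau_U$, noting boundedness of all coefficients; (iii) substitution into $P_U$, isolating the single $O(\langle x\rangle^2)$ term; (iv) differentiating three times in the jet variables, which only multiplies the coefficient bounds by constants depending on $n$, $\mathring{\Gamma}$, $\delta$. The main obstacle — really the only non-bookkeeping point — is verifying that no hidden growth in $x$ sneaks in through the covariant derivatives defining $\nabla^\perp\tau$ (for $\mathcal{T}$) or through repeated differentiation of $P$ in the jet directions: one must check that differentiating with respect to $\nabla^2 U$ does not produce further powers of $x^T$. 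This follows because the jet variables enter $\tau$ and $P$ only algebraically (after the substitution the dependence on $(U,\nabla U,\nabla^2 U)$ is through fixed smooth combinations with $x$-independent or $\langle x\rangle^2$-bounded coefficients), so jet-derivatives do not generate new factors of $x$; once this is pinned down, the bounds $\|\mathcal{T}\|_{C^3}\le C$ and $\|\mathcal{P}\|_{C^3}\le C\langle x\rangle^2$ are immediate.
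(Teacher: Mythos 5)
Your proposal is correct and follows essentially the same route as the paper's proof, which is a one-line citation to \cite[Lemma 5.30]{CM19} observing that every ingredient of $\nabla^\perp\tau$ and $P$ is a smooth function of the jet data and that $x(p)$ does not enter $\nabla^\perp\tau$ while it enters $P$ quadratically through the $x^T$ terms. You fill in the bookkeeping the paper elides (the lower bound on $|\mathbf{H}_U|$ from compactness, the graph expansions, and the check that differentiating in the jet variables produces no new powers of $x$), and apart from the harmless slip of writing ``$1$-jet'' where you list the $2$-jet $(p,U,\nabla U,\nabla^2 U)$, the argument matches.
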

\begin{proof}
As in \cite[Lemma 5.30]{CM19}, each quantity in the definition of $\nabla^\perp\tau$ and $P$ is a smooth function of $(p,U,\nabla U, \nabla^2 U)$. The position vector $x(p)$ does not enter the definition of $\nabla^\perp \tau$ but it enters the definition of $P$ quadratically. 
\end{proof}

In the remainder of this section, $\mathring{\Gamma}$ is either a round sphere or an Abresch-Langer curve. 

\subsection{First order decomposition}
\label{sec:decomp}

Given a compactly supported normal field $U$ on $\Gamma$, we have the orthogonal decomposition $U= J+h$, where $J \in \mathcal{K}$ and $h\in \mathcal{K}^\perp$. We may further decompose $J = U_0 + J'$, where $U_0 \in \mathcal{K}_0$ and $J' \in \mathcal{K}_1$. 

The first order expansion of $\phi$ implies the following estimates (see \cite[Proposition 5.4]{Z20}):

\begin{proposition}[\cite{Z20}]
\label{prop:ord1}
Let $\Gamma=\mathring{\Gamma}\times \mathbb{R}^{n-k}$. There exists $\epsilon_0$ and $C$ so that if $U$ is a compactly supported normal field on $\Gamma$ with $\|U\|_{C^2}<\epsilon_0$, then
\begin{equation}
\|h\|_{W^{2,2}}^2 \leq C(\|\phi_U\|_{L^2}^2 + \|U\|_{L^2}^4), 
\end{equation}
and for any $\kappa\in(0,1]$, 
\begin{equation}
\int_\Gamma \langle x\rangle^6 |U|_2^3\rho \leq C(\kappa)(\|\phi_U\|_{L^2}^\frac{6}{3+\kappa} + \|U\|_{L^2}^3).
\end{equation}
\end{proposition}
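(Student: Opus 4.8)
The goal is to prove Proposition~\ref{prop:ord1}, which extracts quantitative control on the error part $h$ and a weighted cubic quantity from the hypothesis that the shrinker quantity $\phi_U$ is small. My approach mirrors \cite[Proposition 5.4]{Z20}: everything rests on the first-order Taylor expansion of the map $U\mapsto\phi_U$ around $U=0$. Write $\varphi(p,U,\nabla U,\nabla^2 U)$ for the functional realizing $\phi_U$; since $\Gamma$ is a shrinker we have $\varphi(p,0,0,0)=0$, and the linearization is precisely the Jacobi operator, $\mathcal D\varphi(U)=-LU$ (up to sign conventions), because $L$ is by definition the linearized shrinker operator on normal sections. Thus Taylor's theorem with remainder gives $\phi_U = -LU + Q(U)$, where $Q$ is a quadratic-type remainder controlled by $\|U\|_{C^2}$ and involving up to two derivatives of $U$. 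The point of the smallness hypothesis $\|U\|_{C^2}<\epsilon_0$ is exactly to make this remainder manageable.

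\textbf{Key steps.} First I would set up the splitting $U=J+h$ with $J\in\mathcal K$, $h\in\mathcal K^\perp$ (with respect to the weighted $L^2$ inner product), as already introduced in Section~\ref{sec:decomp}. Pairing the identity $\phi_U=-LU+Q(U)$ with $h$ and integrating against $\rho$, the term $\langle LU,h\rangle_{L^2}=\langle Lh,h\rangle_{L^2}$ since $L$ is self-adjoint in $L^2(\rho)$ and annihilates $\mathcal K$; moreover $L$ has a spectral gap on $\mathcal K^\perp$, so $-\langle Lh,h\rangle_{L^2}\gtrsim \|h\|_{W^{1,2}}^2$, and then elliptic estimates for $L$ upgrade this to $\|h\|_{W^{2,2}}^2$. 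This yields $\|h\|_{W^{2,2}}^2 \le C(\|\phi_U\|_{L^2}^2 + \|Q(U)\|_{L^2}^2)$, and the quadratic remainder obeys $\|Q(U)\|_{L^2}^2\le C\|U\|_{C^2}^2\|U\|_{W^{2,2}}^2 \le C\|U\|_{L^2}^4$ after absorbing — the last step using that on a cylinder the compactly supported $U$ with small $C^2$ norm satisfies reverse-type/interpolation bounds relating $\|U\|_{W^{2,2}}$ to $\|U\|_{L^2}$ together with the a priori derivative bounds, exactly as in \cite{Z20}. For the second inequality, the weighted cubic integral $\int_\Gamma\langle x\rangle^6|U|_2^3\rho$ is handled by interpolating the $L^3$ quantity between $L^2$ and a higher weighted norm: one writes $|U|_2^3 = |U|_2^{2}\cdot|U|_2$, uses the pointwise $C^2$ bound on the low factor crudely but keeps one factor in $L^2$, then invokes the $W^{2,2}$ control just obtained plus the Jacobi-field estimates of Corollary~\ref{cor:jacobi-est} to control the $J$-component's polynomial growth against $\langle x\rangle^6\rho$. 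The parameter $\kappa$ enters through a Young/interpolation inequality trading the exponent $6$ against $6/(3+\kappa)$ at the cost of a constant $C(\kappa)$; this is the standard device for converting an $L^2$ bound on $\phi_U$ into an $L^3$-type bound with exponent slightly below~$2$.

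\textbf{Main obstacle.} The delicate point is the bookkeeping in the second estimate: one must control a cubic weighted quantity of $U$, which includes the Jacobi part $J=U_0+J'$ whose components grow polynomially (like $\langle x\rangle^2$ by Corollary~\ref{cor:jacobi-est}), against the weight $\langle x\rangle^6\rho$ — so the Gaussian weight must beat the polynomial growth, and simultaneously the $\kappa$-interpolation must be arranged so the exponent on $\|\phi_U\|_{L^2}$ comes out as $6/(3+\kappa)$ and not worse. Getting the weights, the derivative counts, and the interpolation exponents to line up consistently — while only paying a constant depending on $\kappa$ and not on anything forbidden — is where the real care lies; the spectral-gap argument for $h$ is comparatively routine. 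Since this is verbatim the content of \cite[Proposition 5.4]{Z20}, I would largely quote that argument, indicating only the places where the cylinder structure (trivial normal bundle, splitting $U=u\mathbf N+u^\alpha\pr_{z_\alpha}$) makes each term explicit.
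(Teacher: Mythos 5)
The paper does not prove this proposition — it imports it verbatim from \cite[Proposition 5.4]{Z20} — so there is no ``paper proof'' to compare against; I can only assess whether your sketch would work on its own terms. The overall plan (first-order expansion $\phi_U=-LU+Q(U)$, decomposition $U=J+h$, invertibility of $L$ on $\mathcal K^\perp$, and absorption of the remainder using the polynomial growth of Jacobi fields against the Gaussian weight) is the right skeleton. But there is a genuine error in the key estimate for $h$: you claim that pairing with $h$ and integrating gives $-\langle Lh,h\rangle_{L^2}\gtrsim\|h\|_{W^{1,2}}^2$ by a ``spectral gap.'' The Jacobi operator $L=\mathcal L+\tfrac12+\langle\cdot,A_{kl}\rangle A_{kl}$ is bounded above but not below, and it is not sign-definite on $\mathcal K^\perp$: for example $L\mathbf H=\mathbf H$ with $\mathbf H\in\mathcal K^\perp$, so $\langle L\mathbf H,\mathbf H\rangle>0$, and the quadratic form changes sign across the low modes. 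The correct ingredient is not a sign of the quadratic form but the operator estimate $\|Lh\|_{L^2}\geq c\|h\|_{L^2}$ for $h\in\mathcal K^\perp$ (from discreteness of the spectrum and finite-dimensionality of $\ker L$), upgraded to $\|h\|_{W^{2,2}}\leq C\|Lh\|_{L^2}$ by Schauder/elliptic theory; then $Lh=LU=-\phi_U+Q(U)$ gives the first inequality once the remainder is controlled.

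The second place you gloss over a real difficulty is exactly the remainder bound you label ``after absorbing'': $\|Q(U)\|_{L^2}^2\leq C\|U\|_{C^2}^2\|U\|_{W^{2,2}}^2\leq C\|U\|_{L^2}^4$ is not a consequence of any reverse inequality $\|U\|_{W^{2,2}}\lesssim\|U\|_{L^2}$ (which is false). What actually makes the $\|U\|_{L^2}^4$ term appear is the split $|U|_2^4\lesssim|J|_2^4+|h|_2^4$: for the Jacobi part Corollary~\ref{cor:jacobi-est} gives $|J|_2\leq C\langle x\rangle^2\|J\|_{L^2}$, so $\int|J|_2^4\rho\leq C\|J\|_{L^2}^4\leq C\|U\|_{L^2}^4$ because the Gaussian beats the polynomial weight, while the $|h|_2^4$ contribution is $\leq\|h\|_{C^2}^2\|h\|_{W^{2,2}}^2\leq C\epsilon_0^2\|h\|_{W^{2,2}}^2$ and is absorbed into the left-hand side. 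Without this split the $\|U\|_{L^2}^4$ power cannot be obtained from $\|U\|_{W^{2,2}}$ alone. Similarly, for the cubic estimate the exponent $\frac{6}{3+\kappa}$ comes from a specific interpolation of $\|\phi_U\|$ between weighted $L^2$ and a higher derivative norm; you should at least indicate which two norms are being interpolated, since ``a Young/interpolation inequality'' is not enough to see why the exponent lands at $\frac{6}{3+\kappa}$ rather than $\frac{3}{2}$ or $2$.
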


\subsection{Estimates by $\nabla^\perp\tau$ and $\phi$}

We proceed with $U = J+h = U_0 + J'+h$ decomposed as in Section \ref{sec:decomp}. 

\begin{lemma}
There exists $C$ so that for any $U$ as above, we have the pointwise estimate
\begin{equation}
\label{eq:taylor-est-tau}
| |\nabla^\perp\tau|^2_U - \frac{1}{2}\mathcal{D}^2\mathcal{T}(J', J') | \leq C( |U|_2^3+ 2|J|_2|h|_2 +|h|_2^2 + 2|J'|_2|U_0|_2  + |U_0|_2^2). 
\end{equation}
\end{lemma}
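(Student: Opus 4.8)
The plan is to Taylor expand the functional $\mathcal{T}(p,U,\nabla U,\nabla^2 U)$ in $U$ to second order about $U=0$, exploiting the fact (recorded in the subsection on $\tau$ and $P$) that $\nabla^\perp\tau=0$ on $\Gamma$, so that $\mathcal{T}(p,0,0,0)=0$ and $\mathcal{D}\mathcal{T}=0$ as well (the zeroth and first order terms vanish because $|\nabla^\perp\tau|^2$ is a nonnegative quantity vanishing identically on $\Gamma$, hence $0$ is a critical point). Thus Taylor's theorem with remainder gives, pointwise,
\[
\Big| |\nabla^\perp\tau|^2_U - \tfrac12 \mathcal{D}^2\mathcal{T}(U,U) \Big| \leq C\,|U|_2^3,
\]
where the constant $C$ comes from the uniform $C^3$ bound $\|\mathcal{T}\|_{C^3}\leq C$ of Lemma \ref{lem:C3tauP}, valid for $\|U\|_{C^2}<\delta$. (Here one also uses that the cubic remainder is controlled by $|U|_2^3$ since $\mathcal{T}$ depends on $U$ only through $(U,\nabla U,\nabla^2 U)$ and not on the position vector $x$.)

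\textbf{Reducing the quadratic form.} The remaining task is to replace $\tfrac12\mathcal{D}^2\mathcal{T}(U,U)$ by $\tfrac12\mathcal{D}^2\mathcal{T}(J',J')$ at the cost of the cross-terms on the right-hand side. Write $U = U_0 + J' + h$ as in Section \ref{sec:decomp} and expand the bilinear form:
\[
\mathcal{D}^2\mathcal{T}(U,U) = \mathcal{D}^2\mathcal{T}(J',J') + \mathcal{D}^2\mathcal{T}(U_0,U_0) + \mathcal{D}^2\mathcal{T}(h,h) + 2\mathcal{D}^2\mathcal{T}(J',U_0) + 2\mathcal{D}^2\mathcal{T}(J',h) + 2\mathcal{D}^2\mathcal{T}(U_0,h).
\]
Each term other than $\mathcal{D}^2\mathcal{T}(J',J')$ involves at least one factor from $U_0$ or $h$. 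Using the bound $|\mathcal{D}^2\mathcal{T}(V,W)|\leq C|V|_2|W|_2$ (again from $\|\mathcal{T}\|_{C^2}\leq C$), one estimates $|\mathcal{D}^2\mathcal{T}(U_0,U_0)|\leq C|U_0|_2^2$, $|\mathcal{D}^2\mathcal{T}(h,h)|\leq C|h|_2^2$, $|\mathcal{D}^2\mathcal{T}(J',h)|\leq C|J|_2|h|_2$ (absorbing $|J'|_2\leq |J|_2$), and $|\mathcal{D}^2\mathcal{T}(J',U_0)|\leq C|J'|_2|U_0|_2$; the term $\mathcal{D}^2\mathcal{T}(U_0,h)$ is dominated by $|U_0|_2^2 + |h|_2^2$ or absorbed into the listed terms after rescaling the constant. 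Collecting these with the cubic remainder and renaming the constant $C$ yields exactly the asserted inequality (\ref{eq:taylor-est-tau}).

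\textbf{Main obstacle.} The only genuinely delicate point is justifying that the zeroth and first order Taylor coefficients of $\mathcal{T}$ vanish identically on $\Gamma$, i.e. that $\mathcal{D}\mathcal{T}(W)=0$ for all $W$. This does not follow merely from $\nabla^\perp\tau=0$ at $U=0$ pointwise; rather it uses that $|\nabla^\perp\tau|^2_U\geq 0$ for all admissible $U$ with $U=0$ an interior minimum, together with the displayed computation $(|\nabla^\perp\tau|^2)_s = 0$, $(|\nabla^\perp\tau|^2)_{ss} = 2|(\nabla^\perp\tau)_s|^2$ from the auxiliary variation analysis in Section \ref{sec:var}. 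Once that vanishing is in hand, everything else is bookkeeping with the uniform derivative bounds from Lemma \ref{lem:C3tauP} and the triangle inequality applied to the decomposition $U=U_0+J'+h$; no integration or use of the Gaussian weight is needed for this pointwise estimate.
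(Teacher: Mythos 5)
Your proof is correct and follows essentially the same route as the paper: Taylor expand $\mathcal{T}$ to second order using the uniform $C^3$ bound from Lemma \ref{lem:C3tauP}, use $\mathcal{T}=\mathcal{D}\mathcal{T}=0$ at $U=0$, and then expand the quadratic form according to $U=U_0+J'+h$ and absorb the cross-terms. One small correction to your ``main obstacle'' remark: the vanishing of $\mathcal{D}\mathcal{T}$ \emph{does} follow directly from $\nabla^\perp\tau=0$ at $U=0$, since $\mathcal{T}=|\mathcal{S}|^2$ for a smooth tensor-valued $\mathcal{S}=\nabla^\perp\tau_U$, so $\mathcal{D}\mathcal{T}=2\langle \mathcal{S},\mathcal{D}\mathcal{S}\rangle$ vanishes at $U=0$ by the product rule; no appeal to an interior-minimum argument is needed (this is exactly the content of the paper's identity $(|\nabla^\perp\tau|^2)_s=2\langle\nabla^\perp\tau,(\nabla^\perp\tau)_s\rangle=0$). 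Also, $|J'|_2\leq|J|_2$ is not generally true pointwise since $\mathcal{K}_0$ and $\mathcal{K}_1$ are only $L^2$-orthogonal; but the triangle inequality $|J'|_2\leq|J|_2+|U_0|_2$ suffices after enlarging $C$, so this does not affect the conclusion.
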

\begin{proof}
Let $T(s) = \mathcal{T}(p,sU,s\nabla U, s\nabla^2 U)$. Since $\|\mathcal{T}\|_{C^3} \leq C$ by Lemma \ref{lem:C3tauP}, Taylor expansion about $s=0$ gives \[ |T(1) -T(0) - T'(0) - \frac{1}{2}T''(0)| \leq C |U|_2^3.\] 

Note that $T(1) = |\nabla^\perp\tau|^2_U$, $T(0)=0$, $T'(0) = \mathcal{DT}(U)=0$ and $T''(0) = \frac{1}{2}\mathcal{D}^2\mathcal{T}(U,U)$. Expanding the bilinear form $\mathcal{D}^2\mathcal{T}(U,U)$ according to the decomposition of $U$, and using $\|\mathcal{T}\|_{C^3}\leq C$ to estimate the remaining terms except $\mathcal{D}^2\mathcal{T}(J',J')$ finishes the proof. 
\end{proof}

We may now estimate the variation field $U$ in terms of $\phi_U$ and $|\nabla^\perp\tau|_U$. 

\begin{proposition}
\label{prop:entire-est-tau-phi}
Let $\Gamma = \mathring{\Gamma}\times \mathbb{R}^{n-k}$, where $\mathring{\Gamma}$ is a round shrinking sphere or an Abresch-Langer curve. There exists $\epsilon_0>0$ such that if $\kappa \in (0,1]$ and $U$ is a compactly supported normal vector field on $\Gamma$ with $\|U\|_{C^2}\leq \epsilon_0$, then 
  \begin{equation}
 \|U\|_{L^2}^2 \leq C(\kappa) ( \|U_0\|_{L^2}^2+ \| |\nabla^\perp\tau|^2_U\|_{L^1}   + \|\phi_U\|_{L^2}^\frac{6}{3+\kappa}),
 \end{equation}
 where $U_0 = \pi_{\mathcal{K}_0}(U)$. 
 \end{proposition}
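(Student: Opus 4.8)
The plan is to integrate the pointwise Taylor estimate (\ref{eq:taylor-est-tau}) against the Gaussian weight and to extract $\|J'\|_{L^2}$ from the main term using the coercivity of $\mathcal{D}^2\mathcal{T}$ on $\mathcal{K}_1$ provided by Propositions \ref{prop:dDtauS} and \ref{prop:dDtauAL}. Write $U = U_0 + J' + h$ as in Section \ref{sec:decomp}. The computations in the proofs of Propositions \ref{prop:dDtauS}/\ref{prop:dDtauAL} exhibit the density $\tfrac12\mathcal{D}^2\mathcal{T}(J',J') = |(\nabla^\perp\tau)_s|^2$ as pointwise nonnegative, and $|\nabla^\perp\tau|^2_U$ is of course also nonnegative, so integrating (\ref{eq:taylor-est-tau}) gives
\[
\tfrac12\|\mathcal{D}^2\mathcal{T}(J',J')\|_{L^1} \le \||\nabla^\perp\tau|^2_U\|_{L^1} + C\!\int_\Gamma\Big(|U|_2^3 + |J|_2|h|_2 + |h|_2^2 + |J'|_2|U_0|_2 + |U_0|_2^2\Big)\rho,
\]
while Propositions \ref{prop:dDtauS}/\ref{prop:dDtauAL} supply a fixed constant $c = c(\mathring{\Gamma})>0$ with $\|\mathcal{D}^2\mathcal{T}(J',J')\|_{L^1}\ge c\|J'\|_{L^2}^2$. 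Thus the task reduces to absorbing the five error integrals.

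For the cubic term I use $\langle x\rangle\ge 1$ and the second estimate of Proposition \ref{prop:ord1}, so that $\int_\Gamma|U|_2^3\rho\le\int_\Gamma\langle x\rangle^6|U|_2^3\rho\le C(\kappa)\big(\|\phi_U\|_{L^2}^{6/(3+\kappa)}+\|U\|_{L^2}^3\big)$. For the integrals carrying a Jacobi-field factor I apply Cauchy--Schwarz together with Corollary \ref{cor:jacobi-est} and the integrability of $\langle x\rangle^4\rho$ over the cylinder, which yields $\|J\|_{W^{2,2}} + \|J'\|_{W^{2,2}} + \|U_0\|_{W^{2,2}}\le C\|U\|_{L^2}$; combined with Young's inequality (with small parameters) this bounds $\int_\Gamma\big(|J'|_2|U_0|_2+|U_0|_2^2\big)\rho$ by $\delta\|J'\|_{L^2}^2 + C_\delta\|U_0\|_{L^2}^2$ and bounds $\int_\Gamma|J|_2|h|_2\rho$ by $\delta'\|U\|_{L^2}^2 + C_{\delta'}\|h\|_{W^{2,2}}^2$. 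The remaining $h$-contributions, namely $\int_\Gamma|h|_2^2\rho = \|h\|_{W^{2,2}}^2$ and the term just produced, are controlled by the first estimate of Proposition \ref{prop:ord1}, $\|h\|_{W^{2,2}}^2\le C\big(\|\phi_U\|_{L^2}^2+\|U\|_{L^2}^4\big)$.

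Assembling everything, choosing $\delta = c/4$, and splitting orthogonally $\|U\|_{L^2}^2 = \|U_0\|_{L^2}^2+\|J'\|_{L^2}^2+\|h\|_{L^2}^2$, I obtain an inequality of the shape
\[
\|U\|_{L^2}^2 \le C(\kappa)\Big(\||\nabla^\perp\tau|^2_U\|_{L^1}+\|U_0\|_{L^2}^2+\|\phi_U\|_{L^2}^{6/(3+\kappa)}+\|\phi_U\|_{L^2}^2\Big)+C\Big(\delta'\|U\|_{L^2}^2+\|U\|_{L^2}^3+\|U\|_{L^2}^4\Big).
\]
It then remains to invoke smallness: since $\|U\|_{C^2}\le\epsilon_0$ and $\Gamma$ is a shrinker one has $|\phi_U|\le C\langle x\rangle|U|_2$ pointwise, whence both $\|U\|_{L^2}$ and $\|\phi_U\|_{L^2}$ are $\le C\epsilon_0$ (using $\int_\Gamma\langle x\rangle^2\rho<\infty$). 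Taking $\delta'$ small and then $\epsilon_0$ small lets me absorb the superlinear terms $\delta'\|U\|_{L^2}^2$, $\|U\|_{L^2}^3$, $\|U\|_{L^2}^4$ into the left-hand side, and since $6/(3+\kappa)<2$ for $\kappa\in(0,1]$ while $\|\phi_U\|_{L^2}\le 1$, the term $\|\phi_U\|_{L^2}^2$ is dominated by $\|\phi_U\|_{L^2}^{6/(3+\kappa)}$. This yields the stated estimate.

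The main obstacle is essentially bookkeeping rather than a genuine difficulty: the coercivity constant $c$ is fixed once and for all by Propositions \ref{prop:dDtauS}/\ref{prop:dDtauAL}, so the Young's-inequality parameters $\delta,\delta'$ must be chosen relative to $c$ and only afterwards may $\epsilon_0$ be shrunk so that the residual quadratic-in-$\|U\|_{L^2}$ errors become $<\tfrac12$; and one must verify throughout that every $\langle x\rangle$-weight appearing — from Corollary \ref{cor:jacobi-est}, from the Taylor remainder of $\mathcal{P}$ versus $\mathcal{T}$ in Lemma \ref{lem:C3tauP}, and from the pointwise bound on $\phi_U$ — is integrable against $\rho$ over the noncompact factor $\mathbb{R}^{n-k}$, which is exactly why $\mathring{\Gamma}$ is taken compact.
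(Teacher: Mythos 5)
Your proof is correct and takes essentially the same approach as the paper's: integrate the Taylor estimate \eqref{eq:taylor-est-tau}, use the $L^1$-coercivity of $\mathcal{D}^2\mathcal{T}$ on $\mathcal{K}_1$ from Propositions \ref{prop:dDtauS}/\ref{prop:dDtauAL}, control the error terms via Corollary \ref{cor:jacobi-est} and Proposition \ref{prop:ord1}, and absorb the quadratic-in-$\|U\|_{L^2}$ remainders by Young's inequality and smallness of $\epsilon_0$. You are slightly more explicit than the paper in spelling out the pointwise nonnegativity of $\tfrac12\mathcal{D}^2\mathcal{T}(J',J')=|(\nabla^\perp\tau)_s|^2$ (needed to pass from the $L^1$ difference bound to a one-sided estimate), which is a welcome clarification; the passing reference to the $\mathcal{P}$ weight in Lemma \ref{lem:C3tauP} is irrelevant here but harmless.
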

 \begin{proof}

By Proposition \ref{prop:dDtauS} or \ref{prop:dDtauAL} respectively we have $\delta_0 \|J'\|_{L^2}^2 \leq \|\mathcal{D}^2 \mathcal{T}(J',J')\|_{L^1}$ for some $\delta_0>0$. Now to estimate the right hand side, we integrate estimate (\ref{eq:taylor-est-tau}), using Corollary \ref{cor:jacobi-est} to estimate the Jacobi terms. This gives

\begin{equation}
\begin{split}
C^{-1}\|\mathcal{D}^2 \mathcal{T}(J',J')\|_{L^1}\leq{}& \| |\nabla^\perp\tau|^2_U\|_{L^1} + \|U\|_{W^{2,3}}^3+ \|h\|_{W^{2,2}}^2+\|U_0\|_{L^2}^2 \\&+\|U\|_{L^2} \int_\Gamma \langle x\rangle^2 |h|_2 \rho + \|U\|_{L^2}\|U_0\|_{L^2}\\
\leq{}& \| |\nabla^\perp\tau|^2_U\|_{L^1} + \|U\|_{W^{2,3}}^3+ (1+\epsilon^{-1}) \|h\|_{W^{2,2}}^2+\|U_0\|_{L^2}^2 \\&+2\epsilon \|U\|_{L^2} + \epsilon^{-1} \|U_0\|_{L^2}^2. 
\end{split}
\end{equation}

Here for the second line we have used Cauchy-Schwarz, so that $\int_\Gamma \langle x\rangle^2 |h|_2 \rho \leq C\|h\|_{W^{2,2}}^2$, as well as the elementary inequality $2ab\leq \epsilon a^2 + \epsilon^{-1} b^2$. 

Now for small enough $\epsilon_0$, we certainly have $\|U\|_{L^2}<1$ and $\|\phi_U\|_{W^{1,2}}<1$, so lower powers dominate. Using Proposition \ref{prop:ord1} then gives 

\begin{equation}
C^{-1} \delta_0 \|J'\|_{L^2}^2 \leq \| |\nabla^\perp\tau|^2_U\|_{L^1} + \epsilon \|U\|_{L^2}^2 + C(\epsilon,\kappa) \left(\|\phi_U\|_{L^2}^\frac{6}{3+\kappa}   + \|U_0\|_{L^2}^2 \right).
\end{equation}

Since \begin{equation}\|U\|_{L^2}^2 \leq \|U_0\|^2_{L^2} + \|h\|_{L^2}^2 + \|J'\|_{L^2}^2 \leq \|U_0\|_{L^2}^2  + C(\|\phi_U\|_{L^2}^2  +\|U\|_{L^2}^4) + \|J'\|_{L^2}^2, \end{equation}

if we choose $\epsilon < \frac{C}{\delta_0}$ then the $\|U\|_{L^2}^2$ term may be absorbed into the left hand side; thus
 
  \begin{equation}
 \|U\|_{L^2}^2 \leq C(\kappa) (\| |\nabla^\perp\tau|^2_U\|_{L^1}  +\|\phi_U\|_{L^2}^\frac{6}{3+\kappa} +  \|U_0\|_{L^2}^2). 
 \end{equation}
\end{proof}

\begin{remark}
Proposition \ref{prop:entire-est-tau-phi} is analogous to \cite[Proposition 4.47]{CM19} and to \cite[Proposition 2.1]{CM15}. Indeed, following the cutoff and rotation method we use in the proof of Theorem \ref{thm:lojasiewicz-tau} below, or in \cite[Theorem 7.1]{Z20}, yields a corresponding estimate by $|\nabla^\perp\tau|$ and $\phi$ for graphs over a (sufficiently large) subdomain. 
\end{remark}

\subsection{Estimates by $\phi$}

We now observe as in \cite{CM19} that the shrinker quantity $\phi$ controls the auxiliary quantity $\nabla^\perp\tau$, to a degree consistent with a second order obstruction. 

First, we need a PDE estimate proven by Colding-Minicozzi \cite{CM19}: 

\begin{proposition}[\cite{CM19}]
\label{prop:dtauP}
Let $\Gamma = \mathring{\Gamma}\times \mathbb{R}^{n-k}$, where $\mathring{\Gamma}$ is a round shrinking sphere or an Abresch-Langer curve. There exist $\delta>0$ and $C=C(\|U\|_{C^3})$ so that whenever $\|U\|_{C^2}<\delta$, we have
\begin{equation}\||\nabla^\perp\tau|^2_U\|_{L^1} \leq C( \|P_U\|_{L^1}  + \|\phi_U\|_{W^{2,1}} + \|\phi_U\|_{W^{1,2}}^2).\end{equation}
\end{proposition}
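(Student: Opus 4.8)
\textbf{Proof plan for Proposition \ref{prop:dtauP}.}

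The statement is attributed to Colding--Minicozzi \cite{CM19}, so the plan is to extract the mechanism from there and record the shape of the argument for the cylinders at hand. The starting point is the elliptic PDE satisfied by $w := |\nabla^\perp\tau|^2$ on a submanifold where $\mathbf{H}$ never vanishes, as derived in \cite{CM19}: schematically $\mathcal{L}_\tau w \geq |\nabla^\perp\nabla^\perp\tau|^2 - \langle w, \cdot\rangle - P - (\text{terms involving } \phi \text{ and its derivatives up to second order})$, where $\mathcal{L}_\tau$ is the relevant drift-type operator (the factor $|\mathbf{H}|^{-1}$ and the definition of $\tau = A/|\mathbf{H}|$ are what produce the inhomogeneous term $P$). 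On the model cylinder $\Gamma$ itself one has $\nabla^\perp\tau = 0$ and $P = 0$ (recorded in the excerpt), so on the graph $\Gamma_U$ all these quantities are small, controlled by $\|U\|_{C^2}$; the point of the $C^3$ hypothesis is to keep the coefficients of the PDE and the error terms bounded, using the crude bounds of Lemma \ref{lem:C3tauP}.

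The key step is then a weighted integral (Caccioppoli-type) estimate: multiply the differential inequality for $w$ by the Gaussian weight $\rho$ and integrate over $\Gamma_U$. Integration by parts moves the drift Laplacian onto constants (or onto a cutoff, if one localizes; but here $U$ is compactly supported so no cutoff is needed), and the $|\nabla^\perp\nabla^\perp\tau|^2$ term on the right is discarded as nonnegative. What remains on the left after integration by parts is (a positive multiple of) $\int w\,\rho = \|w\|_{L^1}$, coming from the zeroth-order term of $\mathcal{L}_\tau$ together with the reaction term; and on the right one is left with $\int |P_U|\rho + \int (|\phi_U|_2 + \cdots)\rho$, i.e. $\|P_U\|_{L^1} + \|\phi_U\|_{W^{2,1}}$, plus the genuinely quadratic contribution $\|\phi_U\|_{W^{1,2}}^2$ that arises when $\phi$-terms are paired against $w^{1/2}$ or $\nabla^\perp\tau$ and one applies Cauchy--Schwarz $2ab \le \epsilon a^2 + \epsilon^{-1}b^2$ to absorb the resulting $\|w\|_{L^1}$ piece back to the left. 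The constant $C$ inherits a dependence on $\|U\|_{C^3}$ through the coefficients and through Lemma \ref{lem:C3tauP}.

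The main obstacle is the bookkeeping of the $\phi$-dependent error terms: one must verify that every place where the model identities $\nabla^\perp\tau = 0$, $\nabla^\perp\mathbf{N} = 0$, $\phi = 0$ were used on $\Gamma$ contributes, on $\Gamma_U$, only terms that are either linear in the second-order jet of $\phi_U$ (hence absorbed into $\|\phi_U\|_{W^{2,1}}$) or quadratic in its first-order jet (hence into $\|\phi_U\|_{W^{1,2}}^2$), with no term linear in $\phi_U$ paired against something large. On the round and Abresch--Langer cylinders this is exactly the computation done in \cite{CM19} for the round case, and the only new point for Abresch--Langer curves is that the geodesic curvature $\kappa$ is nonconstant; since $\kappa$ and $\dot\kappa$ are bounded on the compact factor $\mathring{\Gamma}$, the extra $\nabla\kappa$ terms are harmless and do not affect the structure of the estimate. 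One also checks that the weight $\langle x\rangle$-growth in $P_U$ (from the $|\mathbf{H}|^{-2}|A(x^T,\cdot)|^2$ term) is controlled: here it is absorbed into $\|P_U\|_{L^1}$ on the right, which is precisely why $P$ is kept as an explicit term rather than estimated away at this stage — it is dealt with separately (and shown to be consistent with a second-order obstruction) in the subsequent subsection.
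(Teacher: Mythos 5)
Your plan is essentially the same approach as the paper: both rest on the Colding--Minicozzi integral estimate for $|\nabla^\perp\tau|^2$. Where you reconstruct the mechanism (the elliptic inequality for $w=|\nabla^\perp\tau|^2$, Gaussian-weighted integration by parts with no cutoff since $U$ is compactly supported, discarding $|\nabla^\perp\nabla^\perp\tau|^2$, Cauchy--Schwarz absorption), the paper simply cites \cite[Theorem 2.2]{CM19} applied to $\Gamma_U$ with $\psi = 1$ as a black box. The one concrete step you gesture at but do not pin down is the translation from that theorem's statement (which is phrased in terms of $\nabla^\perp\mathbf{H}$) to the estimate in terms of $\phi$: the paper does this via the identity $\nabla^\perp_i\mathbf{H} = -\tfrac{1}{2}A(x^T,X_i) - \nabla^\perp_i\phi$, together with uniform bounds on $A$, $|\mathbf{H}|^{-1}$, $|\nabla\mathbf{H}|$ coming from the $C^3$ control on $U$. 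Your phrase ``one must verify that every place where the model identities \dots $\phi=0$ were used on $\Gamma$ contributes only $\phi$-jet terms'' is the right instinct, but the actual bookkeeping is localized in exactly that one identity rather than scattered through the derivation; making it explicit would turn your plan into the paper's proof.
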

\begin{proof}
Since $U$ is compactly supported, we may apply \cite[Theorem 2.2]{CM19} to $\Gamma_U$ (with $\psi=1$). The desired estimate follows immediately after using the identity $\nabla^\perp_i \mathbf{H} = -\frac{1}{2}A(x^T, X_i)-\nabla^\perp_i \phi$ (cf. \cite[Proof of Theorem 7.4]{CM19}) and that $A$, $|\mathbf{H}|^{-1}$ and $|\nabla \mathbf{H}|$ are uniformly bounded. 
\end{proof}

Second, we have the following Taylor expansion estimate for $P$:

\begin{proposition}
\label{prop:Pest}
Let $\Gamma = \mathring{\Gamma}\times \mathbb{R}^{n-k}$, where $\mathring{\Gamma}$ is a round shrinking sphere or an Abresch-Langer curve. There exist $C,C_\kappa$ so that for any $\kappa\in(0,1]$, we have
\[ \|P_U\|_{L^1} \leq C_\kappa (\|U\|^3_{L^2} + \|\phi_U\|^\frac{6}{3+\kappa}_{L^2}) + C\|U\|_{L^2} \|\phi_U\|_{L^2} + C\|\phi_U\|^2_{W^{1,2}}.\]
\end{proposition}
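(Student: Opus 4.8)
The plan is to mimic the Taylor-expansion argument used for $\nabla^\perp\tau$, but now applied to $P$, exploiting the extra vanishing we have established: $\mathcal{D}\mathcal{P}=0$ (Lemma~\ref{lem:dPAL} for Abresch-Langer, and \cite{CM19} for round spheres) and, crucially, $\mathcal{D}^2\mathcal{P}(V,V)=0$ for $V\in\mathcal{K}$ (Proposition~\ref{prop:d2PAL}). Let $Q(s) = \mathcal{P}(p,sU,s\nabla U,s\nabla^2 U)$; since $\|\mathcal{P}\|_{C^3}\leq C\langle x\rangle^2$ by Lemma~\ref{lem:C3tauP}, Taylor expansion about $s=0$ gives the pointwise bound
\[ \left| P_U - Q'(0) - \tfrac12 Q''(0)\right| \leq C\langle x\rangle^2 |U|_2^3, \]
with $Q'(0)=\mathcal{D}\mathcal{P}(U)=0$ and $\tfrac12 Q''(0) = \tfrac12\mathcal{D}^2\mathcal{P}(U,U)$. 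So the task reduces to estimating $\|\mathcal{D}^2\mathcal{P}(U,U)\|_{L^1}$ and the weighted cubic remainder $\int_\Gamma \langle x\rangle^2|U|_2^3\rho$.

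\textbf{Key steps.} First I would expand the bilinear form $\mathcal{D}^2\mathcal{P}(U,U)$ along the decomposition $U=J+h$ from Section~\ref{sec:decomp}. The diagonal term $\mathcal{D}^2\mathcal{P}(J,J)$ vanishes identically by Proposition~\ref{prop:d2PAL}, so only the cross term $2\mathcal{D}^2\mathcal{P}(J,h)$ and the pure-$h$ term $\mathcal{D}^2\mathcal{P}(h,h)$ survive. Using $\|\mathcal{P}\|_{C^3}\leq C\langle x\rangle^2$ to control the second variation pointwise, these are bounded by $C\langle x\rangle^2(|J|_2|h|_2 + |h|_2^2)$; integrating and applying Corollary~\ref{cor:jacobi-est} to absorb the polynomial weights against the Jacobi factor $|J|_2$ (so that $\langle x\rangle^2|J|_2 \lesssim \langle x\rangle^4\|J\|_{L^2(B_{r_0})}$ is still Gaussian-integrable against $|h|_2$) and Cauchy--Schwarz yields
\[ \|\mathcal{D}^2\mathcal{P}(U,U)\|_{L^1} \leq C\big(\|U\|_{L^2}\|h\|_{W^{2,2}} + \|h\|_{W^{2,2}}^2\big). \]
Next, $\|h\|_{W^{2,2}}^2 \leq C(\|\phi_U\|_{L^2}^2 + \|U\|_{L^2}^4)$ by Proposition~\ref{prop:ord1}, and since $\|U\|_{L^2}$ is small, $\|h\|_{W^{2,2}} \leq C(\|\phi_U\|_{L^2} + \|U\|_{L^2}^2)$; multiplying through and keeping leading-order terms produces $\|U\|_{L^2}\|\phi_U\|_{L^2} + \|\phi_U\|_{L^2}^2 + \|U\|_{L^2}^3$ contributions. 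For the cubic remainder, the weighted estimate $\int_\Gamma\langle x\rangle^6|U|_2^3\rho \leq C(\kappa)(\|\phi_U\|_{L^2}^{6/(3+\kappa)} + \|U\|_{L^2}^3)$ from Proposition~\ref{prop:ord1} already dominates $\int_\Gamma\langle x\rangle^2|U|_2^3\rho$. Collecting everything gives exactly the claimed bound
\[ \|P_U\|_{L^1} \leq C_\kappa\big(\|U\|_{L^2}^3 + \|\phi_U\|_{L^2}^{6/(3+\kappa)}\big) + C\|U\|_{L^2}\|\phi_U\|_{L^2} + C\|\phi_U\|_{W^{1,2}}^2, \]
where the $W^{1,2}$ norm on the last term only strengthens the $L^2$ bound and is harmless.

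\textbf{Main obstacle.} The delicate point is the bookkeeping of the polynomial weights $\langle x\rangle^2$ that enter because $P$ depends quadratically on the position vector $x$ (unlike $\nabla^\perp\tau$, which is weight-free; cf.\ Lemma~\ref{lem:C3tauP}). One must check that every place where $\langle x\rangle^2$ appears can be absorbed: against a Jacobi factor using the $\langle x\rangle^2$ growth bound of Corollary~\ref{cor:jacobi-est} (giving at worst $\langle x\rangle^4$, still Gaussian-integrable), or against the higher weight $\langle x\rangle^6$ built into the cubic estimate of Proposition~\ref{prop:ord1}, or simply paired with the fast-decaying Gaussian in the pure-$h$ terms. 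A secondary subtlety is that $\mathcal{D}^2\mathcal{P}(J,J)$ vanishes on all of $\mathcal{K}$, not just $\mathcal{K}_1$ — this is what lets the cross term, rather than a surviving diagonal term, carry the estimate, and is precisely the content of Proposition~\ref{prop:d2PAL} (equivalently the codimension-one vanishing of $P$, since $\mathring{\Gamma}$ and its $\mathcal{K}$-variations stay codimension one). Once these weight and vanishing facts are in hand, the proof is a routine repetition of the Taylor-plus-decomposition scheme already executed for $\nabla^\perp\tau$.
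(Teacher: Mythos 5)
Your approach is the same as the paper's: Taylor-expand $\mathcal{P}$ about $s=0$, invoke $\mathcal{D}\mathcal{P}=0$ (Lemma~\ref{lem:dPAL}) and $\mathcal{D}^2\mathcal{P}(J,J)=0$ (Proposition~\ref{prop:d2PAL}) to reduce to a cross term, a pure-$h$ term and a cubic remainder, and then estimate with Corollary~\ref{cor:jacobi-est} and Proposition~\ref{prop:ord1}. That part is exactly what the paper sketches (deferring the final bookkeeping to \cite[Proposition 6.1]{CM19}). The cross term $\int\langle x\rangle^2|h|_2|J|_2\rho$ is handled correctly: Corollary~\ref{cor:jacobi-est} converts $|J|_2$ to $\langle x\rangle^2\|J\|_{L^2}$ and the resulting $\langle x\rangle^4$ is Gaussian-integrable against $|h|_2$ after Cauchy--Schwarz.

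The one genuine gap is your treatment of the pure-$h$ term $\int\langle x\rangle^2|h|_2^2\rho$, which you dispose of as ``simply paired with the fast-decaying Gaussian.'' This does not give $\|h\|_{W^{2,2}}^2$: the Gaussian weight $\rho$ is already used up in the definition of $\|h\|_{W^{2,2}}^2=\int|h|_2^2\rho$, so the extra $\langle x\rangle^2$ is not absorbed. Since $h$ lies in $\mathcal{K}^\perp$ rather than $\mathcal{K}$, there is also no analogue of Corollary~\ref{cor:jacobi-est} to trade it against. What is actually needed is a polynomially weighted $W^{2,2}$ estimate for $h$, and obtaining the weight costs a derivative on the source term — this is precisely where the $\|\phi_U\|_{W^{1,2}}^2$ on the right-hand side of the proposition comes from (it is built into \cite[Proposition 6.1]{CM19}, to which the paper defers). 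Consequently your closing remark that the $W^{1,2}$ norm ``only strengthens the $L^2$ bound and is harmless'' has it backwards: the $W^{1,2}$ term is not a gratuitous weakening of an $L^2$ bound you could have proved, but the price of the $\langle x\rangle^2$ weight on the quadratic-in-$h$ piece. Apart from this point, the argument is correct and parallels the paper's.
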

\begin{proof}
The case of round cylinders was proven in \cite[Proposition 6.1]{CM19}, and the proof for Abresch-Langer cylinders proceeds exactly the same way. For the readers' convenience, we emphasise the corresponding ingredients:

Set $P(s) = \mathcal{P}(p,sU, s\nabla U,s\nabla^2 U)$; then $P(0)=0$ and Lemma \ref{lem:dPAL} and Proposition \ref{prop:d2PAL} imply that $P'(0)=0$ and \[|P''(0)| \leq C\langle x\rangle^2 |h|_2 (|J|_2 + |h|_2)\] respectively. Lemma \ref{lem:C3tauP} then gives the Taylor expansion estimate 
\[|P_U| \leq C\langle x\rangle^2 |h|_2 (|J|_2 + |h|_2) + C\langle x\rangle^6 |U|_2^3.\]

The proof now follows as in \cite[Proposition 6.1]{CM19}, using Corollary \ref{cor:jacobi-est} to estimate the $J$ terms and Proposition \ref{prop:ord1} for the $h, U$ terms. 
\end{proof}

We may now prove the main estimate for entire graphs:

\begin{theorem}
\label{thm:entire-est-tau}
Let $\Gamma = \mathring{\Gamma}\times \mathbb{R}^{n-k}$, where $\mathring{\Gamma}$ is a round shrinking sphere or an Abresch-Langer curve. There exists $\epsilon_0>0$ such that if $\kappa \in (0,1]$ and $U$ is a compactly supported normal vector field on $\Gamma$ with $\|U\|_{C^2}\leq \epsilon_0$ and $\|U\|_{C^3}\leq M$, then 
  \begin{equation}
\|U\|_{L^2}^2 \leq C(\kappa,M) (\|U_0\|_{L^2}^2 + \|\phi_U\|_{W^{2,1}} + \|\phi_U\|^2_{W^{1,2}}+ \|\phi_U\|_{L^2}^\frac{6}{3+\kappa}),
 \end{equation}
 where $U_0 = \pi_{\mathcal{K}_0}(U)$. 
 \end{theorem}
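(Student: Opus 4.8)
The statement is a direct synthesis of the two preceding results, Proposition \ref{prop:entire-est-tau-phi} and Proposition \ref{prop:dtauP}, with the $P$-bound of Proposition \ref{prop:Pest} inserted to eliminate the auxiliary quantity. The plan is to start from Proposition \ref{prop:entire-est-tau-phi}, which already gives
\[
\|U\|_{L^2}^2 \leq C(\kappa)\left(\|U_0\|_{L^2}^2 + \||\nabla^\perp\tau|^2_U\|_{L^1} + \|\phi_U\|_{L^2}^{\frac{6}{3+\kappa}}\right),
\]
and then bound the middle term on the right. By Proposition \ref{prop:dtauP} (applicable since $U$ is compactly supported and $\|U\|_{C^2}<\delta$, with the constant controlled by $\|U\|_{C^3}\leq M$), we have $\||\nabla^\perp\tau|^2_U\|_{L^1} \leq C(M)(\|P_U\|_{L^1} + \|\phi_U\|_{W^{2,1}} + \|\phi_U\|_{W^{1,2}}^2)$, and then Proposition \ref{prop:Pest} controls $\|P_U\|_{L^1}$ by $C_\kappa(\|U\|_{L^2}^3 + \|\phi_U\|_{L^2}^{6/(3+\kappa)}) + C\|U\|_{L^2}\|\phi_U\|_{L^2} + C\|\phi_U\|_{W^{1,2}}^2$.

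Substituting everything back, the right-hand side contains $\|U\|_{L^2}$ to powers strictly greater than one — namely $\|U\|_{L^2}^3$ from the $P$-estimate and $\|U\|_{L^2}\|\phi_U\|_{L^2}$, which by $2ab \leq \epsilon a^2 + \epsilon^{-1}b^2$ splits into $\epsilon\|U\|_{L^2}^2 + \epsilon^{-1}\|\phi_U\|_{L^2}^2$. For $\epsilon_0$ small enough we have $\|U\|_{L^2}<1$, so $\|U\|_{L^2}^3 \leq \|U\|_{L^2}^2$ and we may likewise choose $\epsilon$ small; both bad terms are then absorbed into the left-hand side. The surviving terms on the right are $\|U_0\|_{L^2}^2$, $\|\phi_U\|_{W^{2,1}}$, $\|\phi_U\|_{W^{1,2}}^2$ (note $\|\phi_U\|_{L^2}^2 \leq \|\phi_U\|_{W^{1,2}}^2$), and $\|\phi_U\|_{L^2}^{6/(3+\kappa)}$, which is exactly the claimed bound with $C(\kappa,M)$ absorbing all constants. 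One should also note $\|\phi_U\|_{L^2}^{6/(3+\kappa)}$ absorbs the $\|\phi_U\|_{L^2}^{6/(3+\kappa)}$ appearing in both invoked propositions, so there is no conflict.

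There is no real obstacle here: every ingredient is already in hand, and the argument is the routine bookkeeping of chaining three estimates and absorbing superlinear error terms. The only point requiring a modicum of care is keeping track of the dependence of constants — ensuring that the constant from Proposition \ref{prop:dtauP} genuinely depends only on $\|U\|_{C^3}\leq M$ (hence on $M$) and not on any finer norm, and that the smallness threshold $\epsilon_0$ is chosen after $\epsilon$ so that the absorption is legitimate. If I were being fully careful I would also remark that the exponent $\frac{6}{3+\kappa}$ is the same in all three propositions so no new $\kappa$-dependence is introduced beyond what is stated.
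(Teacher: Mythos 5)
Your proposal is correct and follows essentially the same route as the paper: chain Propositions \ref{prop:entire-est-tau-phi}, \ref{prop:dtauP}, and \ref{prop:Pest}, then absorb the superlinear $\|U\|_{L^2}$ terms (the $\|U\|_{L^2}^3$ term using smallness of $\|U\|_{L^2}$, and the $\|U\|_{L^2}\|\phi_U\|_{L^2}$ term via Cauchy's inequality) into the left-hand side. The paper's write-up is terser, but the ingredients, the order in which they are applied, and the absorption step all match your plan.
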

 \begin{proof}
Combining Propositions \ref{prop:entire-est-tau-phi}, \ref{prop:dtauP} and \ref{prop:Pest} and noting that lower powers dominate, we find that
  \begin{equation}
 \|U\|_{L^2}^2 \leq C(\kappa) (\|U\|^3_{L^2}+ \|U\|_{L^2} \|\phi_U\|_{L^2} + \|\phi_U\|^2_{W^{1,2}} + \|\phi_U\|_{W^{2,1}}  +\|\phi_U\|_{L^2}^\frac{6}{3+\kappa}  +  \|U_0\|_{L^2}^2). 
 \end{equation}
  Using the elementary inequality $2ab\leq \epsilon a^2 + \epsilon^{-1} b^2$ on the second term on the right and absorbing the resulting $\|U\|_{L^2}$ terms into the left hand side gives the result. 
 \end{proof}

\section{{\L}ojasiewicz inequalities via $\tau$}
\label{sec:main-est}

In this section, we conclude the {\L}ojasiewicz inequalities from the main estimate Theorem \ref{thm:entire-est-tau}, using the rotation and cutoff procedure in \cite[Section 7]{Z20}. For convenience set $\delta_R := R^n \e^{-R^2/4}$. 

\begin{theorem}
\label{thm:lojasiewicz-tau}
 Let $\mathring{\Gamma}$ be a round shrinking sphere or an Abresch-Langer curve. 
 
There exists $\epsilon_2>0$ so that for any $\epsilon_1$, $\lambda_0$, $C_j$ there exist $R_0, l_0$  such that if $l\geq l_0$, $\Sigma^n\subset \mathbb{R}^N$ has $\lambda(\Sigma)\leq \lambda_0$ and:
\begin{enumerate}
\item For some $R>R_0$, we have that $B_R\cap \Sigma$ is the graph of a normal field $U$ over some cylinder in $\mathcal{C}_n(\mathring{\Gamma})$ with $\|U\|_{C^{2}(B_R)} \leq \epsilon_2$ and $\|U\|_{L^2(B_R)} \leq \epsilon_2/R$; 
\item $|\nabla^j A| \leq C_j$ on $B_R\cap \Sigma$ for all $j \leq l$;
\end{enumerate}
then there is a cylinder $\Gamma \in\mathcal{C}_n(\mathring{\Gamma})$ and a compactly supported normal vector field $V$ over $\Gamma$ with $\|V\|_{C^{2,\alpha}} \leq \epsilon_1$, such that $\Sigma \cap B_{R-6}$ is contained in the graph of $V$, and 

\[
 \|V\|_{L^2}^2 \leq C\left(  \|U\|_{L^2}^{4a_l} +\|\phi\|_{L^1(B_R)}^{a_l} + \|\phi\|_{L^2(B_R)}^{2a_l}+ \delta_{R-5}^{a_l} \right),
\]

where $C=C(n,l,C_l,\lambda_0, \epsilon_1)$ and $a_l \nearrow 1$ as $l\to \infty$. 
\end{theorem}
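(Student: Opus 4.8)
The plan is to deduce Theorem~\ref{thm:lojasiewicz-tau} from the entire-graph estimate of Theorem~\ref{thm:entire-est-tau} by the cutoff-and-rotation procedure of \cite[Section 7]{Z20}, with only cosmetic modifications since the right-hand side now involves the weighted Sobolev norms $\|\phi_U\|_{W^{2,1}}$, $\|\phi_U\|_{W^{1,2}}$ in addition to $\|\phi_U\|_{L^2}$. First I would fix a large radius $r$ (to be chosen, comparable to $R$) and multiply the graphing field $U$ on $B_R\cap\Sigma$ by a logarithmic cutoff $\psi$ supported in $B_{r}$ and equal to $1$ on $B_{r-1}$, with $|\nabla^j\psi|$ controlled; the resulting compactly supported normal field $\tilde U = \psi U$ over a cylinder $\Gamma_0\in\mathcal{C}_n(\mathring\Gamma)$ satisfies $\|\tilde U\|_{C^2}\le C\epsilon_2$, and one must check $\|\tilde U\|_{C^3}\le M$ for some $M=M(n,l,C_l,\dots)$ using hypothesis (2) and interpolation, so that the constants in Theorem~\ref{thm:entire-est-tau} are uniform. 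Applying that theorem to $\tilde U$ gives
\[
\|\tilde U\|_{L^2}^2 \le C(\kappa,M)\bigl(\|\pi_{\mathcal K_0}\tilde U\|_{L^2}^2 + \|\phi_{\tilde U}\|_{W^{2,1}} + \|\phi_{\tilde U}\|_{W^{1,2}}^2 + \|\phi_{\tilde U}\|_{L^2}^{6/(3+\kappa)}\bigr).
\]

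The next step is to estimate each term on the right in terms of the data on $B_R\cap\Sigma$. On $B_{r-1}$ one has $\phi_{\tilde U}=\phi$ (the actual shrinker quantity of $\Sigma$), so the contribution from that region is bounded by $\|\phi\|_{L^1(B_R)}$, $\|\phi\|_{L^2(B_R)}$ and their higher-derivative analogues; the latter are converted to the stated $L^1$, $L^2$ norms of $\phi$ (without derivatives) via interpolation and the pointwise bounds on $|\nabla^j A|$ from hypothesis (2), exactly as in \cite{Z20}, at the cost of passing from exponent $1$ to $a_l<1$ with $a_l\nearrow1$ as $l\to\infty$. On the annulus $B_r\setminus B_{r-1}$, where the cutoff does its work, the integrand is controlled by $\langle x\rangle$-polynomial multiples of $|U|_2$ against the Gaussian weight, which near radius $r$ contributes at most $C\delta_{r}$-type terms; choosing $r$ so that $r-5\le r\le R$ (e.g.\ $r = R-5$, noting $B_{R-6}\subset B_{r-1}$) absorbs these into the $\delta_{R-5}^{a_l}$ term. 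For the $\mathcal K_0$ term I would use the rotation normalisation: after rotating $\Gamma_0$ to a nearby cylinder $\Gamma$, the component of $\tilde U$ in the rotational Jacobi directions $\mathcal K_0$ can be made to vanish to leading order, so $\|\pi_{\mathcal K_0}\tilde U\|_{L^2}^2$ is either eliminated or bounded by $\|U\|_{L^2(B_R)}\cdot(\text{small})$ plus the error terms; this is where the factor $\|U\|_{L^2}^{4a_l}$ in the conclusion originates, coming from the quadratic dependence of the geometry on $U$ combined with the interpolation loss. Finally, $V$ is produced as the graphing field of $\Sigma\cap B_{R-6}$ over the rotated cylinder $\Gamma$, extended by the cutoff to be compactly supported; its $C^{2,\alpha}$ bound follows from Schauder estimates applied to the shrinker-type equation together with the $C^2$ smallness and the $|\nabla^j A|$ bounds, and $\|V\|_{L^2}\le \|\tilde U\|_{L^2}+C\delta_{R-5}$ up to the rotation correction.

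The main obstacle I anticipate is bookkeeping rather than any genuinely new idea: one must track carefully how the weighted higher-Sobolev norms $\|\phi_{\tilde U}\|_{W^{2,1}}$ and $\|\phi_{\tilde U}\|_{W^{1,2}}^2$ — which did not appear in precisely this form in \cite{Z20}, where the second-order expansion of $\phi$ produced a cleaner right-hand side — interact with the cutoff errors and with the interpolation that trades derivatives of $\phi$ for the bounds $C_j$ on $|\nabla^j A|$, all while keeping the exponent $a_l$ uniform and $\nearrow 1$. A secondary subtlety is ensuring the rotation step is compatible with the auxiliary-quantity framework: unlike the pointwise differential-geometric argument of \cite{CM19}, here the estimate is already integrated, so the rotation must be chosen \emph{before} or \emph{simultaneously with} the cutoff so that $\pi_{\mathcal K_0}\tilde U$ is controlled; this is handled exactly as in \cite[Section 7]{Z20} via an implicit function theorem / continuity argument selecting the optimal cylinder $\Gamma\in\mathcal C_n(\mathring\Gamma)$, and I would simply invoke that machinery. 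Modulo these points, the proof is a direct transcription of the Łojasiewicz argument of \cite{Z20} with Theorem~\ref{thm:entire-est-tau} substituted for \cite[Theorem 6.1]{Z20}.
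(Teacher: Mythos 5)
Your proposal follows the same route as the paper: both defer to the cutoff-and-rotation machinery of \cite[Theorem~7.1]{Z20}, substitute the entire-graph estimate Theorem~\ref{thm:entire-est-tau} for the analogous estimate in \cite{Z20}, bound $\|\pi_{\mathcal K_0}(V)\|_{L^2}$ by the rotation normalisation (yielding the $\|U\|_{L^2}^{4a_l}$ term), and use interpolation against the $|\nabla^j A|$ bounds to trade the $W^{2,1}$ and $W^{1,2}$ norms of $\phi$ for $L^1$ and $L^2$ norms at the cost of the exponent $a_l<1$. You correctly anticipate the two subtleties — verifying the uniform $C^3$ bound so that Theorem~\ref{thm:entire-est-tau}'s constant $C(\kappa,M)$ is controlled, and choosing the rotation compatibly with the cutoff — and the final absorption of the $\|\phi\|_{L^2}^{6/(3+\kappa)}$ term requires only the additional observation (stated in the paper, omitted from your sketch) that one picks $\kappa$ small enough so that $\tfrac{3}{3+\kappa}>a_l$, after which lower powers dominate.
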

\begin{proof}
Let $a_l := a_{l,2,n}$ be the exponent from interpolation (see Appendix \ref{sec:interpolation}). Following precisely the proof of \cite[Theorem 7.1]{Z20}, except using Theorem \ref{thm:entire-est-tau} in place of \cite[Theorem 5.8]{Z20} yields a vector field $V$, supported on $\Gamma \cap B_{R-5}$ and such that $\Sigma\cap B_{R-6}$ is contained in the graph of $V$, satisfying the estimate
\begin{equation}
\label{eq:loj-tau1}
\|V\|_{L^2}^2 \leq C(\|\pi_{\mathcal{K}_0}(V)\|_{L^2}^2 + \|\phi_V\|_{W^{2,1}} + \|\phi_V\|_{W^{1,2}}^2 + \|\phi_V\|_{L^2}^\frac{6}{3+\kappa}), 
\end{equation}
where the rotation part satisfies \begin{equation}
\label{eq:projV}
\|\pi_{\mathcal{K}_0}(V)\|_{L^2} \leq C(\|\phi\|_{L^2(B_R)}^{2a_l} + \|U\|_{L^2}^{2a_l} + \delta_{R-2}^{a_l} + \delta_{R-5}).
\end{equation}
Moreover, for any $s\in[1,2]$ we have the cutoff estimate \begin{equation}\label{eq:cutoff}\|\phi_{V}\|_{L^s}^s \leq \|\phi\|_{L^s(B_R)}^s +C(s)\delta_{R-5},\end{equation}

Using interpolation on the $\phi_V$ terms in (\ref{eq:loj-tau1}) and then the cutoff estimate, we have 
\begin{equation}
\label{eq:main-estimate}
\begin{split}
\|V\|_{L^2}^2  \leq {}& C( \|\phi\|_{L^2(B_R)}^{4a_l} + \delta_{R-2}^{2a_l} + \|U\|_{L^2}^{4a_l} + \delta_{R-5}^2) \\&+ C(  \|\phi\|_{L^1(B_R)}^{a_l} + \delta_{R-5}^{a_l} + \|\phi\|_{L^2(B_R)}^{2a_l}+\delta_{R-5}^{2a_l}+  \|\phi\|_{L^2(B_R)}^\frac{6}{3+\kappa} + \delta_{R-5}^\frac{6}{3+\kappa}).
\end{split}
\end{equation}

We take $\kappa$ so that $\frac{3}{3+\kappa}>a_l$. Then since lower powers dominate, we have

\begin{equation}
 \|V\|_{L^2}^2 \leq C\left(  \|U\|_{L^2}^{4a_l} +\|\phi\|_{L^1(B_R)}^{a_l} + \|\phi\|_{L^2(B_R)}^{2a_l}+ \delta_{R-5}^{a_l} \right).
\end{equation}
\end{proof}

\begin{proof}[Proof of Theorem \ref{thm:lojasiewicz-tau-intro}]
Apply Theorem \ref{thm:lojasiewicz-tau} and note that $\|U\|_{L^2}^{4a_l}$ is dominated by the exponential error term for large enough $l$. 
\end{proof}

\begin{proof}[Proof of Theorem \ref{thm:lojasiewicz-grad}]
Let $V,\Gamma$ be as given by Theorem \ref{thm:lojasiewicz-tau-intro}. Exactly as in the proof of \cite[Theorem 1.4]{Z20}, we have
\begin{equation}
|F(\Gamma_V)-F(\Gamma)|  \leq C( \|\phi_V\|_{L^2}^\frac{3}{2} + \|V\|_{L^2}^3),
\end{equation}
and $|F(\Sigma)-F(\Gamma_V)| \leq C\delta_{R-6}$. Consider $l$ large enough so that $a_l > \frac{2}{3}$. The conclusion of Theorem \ref{thm:lojasiewicz-tau-intro}, together with H\"{o}lder's inequality imply that \[\|V\|_{L^2}^3 \leq C(\|\phi\|_{L^2}^\frac{3a_l}{2} + \delta_{R-5}^\frac{3a_l}{2}).\] Using the cutoff estimate (\ref{eq:cutoff}) and collecting dominant terms, we conclude that
\begin{equation}
|F(\Sigma)-F(\Gamma)| \leq C(\|\phi\|_{L^2}^\frac{3a_l}{2} + \delta_{R-6}). 
\end{equation}

\end{proof}

\begin{remark}
\label{rmk:applications}
If in addition to the hypotheses of Theorem \ref{thm:lojasiewicz-tau-intro} one also has $\|\phi\|_{L^2} \leq \e^{-R^2/4}$, then arguing as in the proof of \cite[Theorem 7.2]{Z20} we obtain
\begin{equation}\|V\|_{L^2}^2 \leq C\left(  \e^{-a_l \frac{R^2}{4}}+ (R-5)^{a_l n} \e^{-a_l \frac{(R-5)^2}{4}}\right).
\end{equation}
Since $l$ may be chosen so that $a_l$ is arbitrarily close to 1, this gives an alternative proof of \cite[Theorem 7.2]{Z20}, and hence of \cite[Theorems 1.1 and 1.2]{Z20}, for the special cases where $\Gamma$ is a round cylinder or an Abresch-Langer cylinder. 
\end{remark}
 
\appendix

\section{Interpolation}
\label{sec:interpolation}

Here we recall some interpolation inequalities; see also \cite[Appendix A]{Z20} and \cite[Appendix B]{CM15}. In this appendix, $L^p$ refers to unweighted space, with $L^p_\rho$ the $\rho$-weighted space. 
  
  \begin{lemma}
There exists $C=C(m,j,n)$ so that if $u$ is a $C^m$ function on $B^n_{r}$, then for $j\leq m$, setting $a_{m,j,n} = \frac{m-j}{m+n}$ we have
\[r^j \|\nabla^j u\|_{L^\infty(B_r)} \leq C\left( r^{-n} \|u\|_{L^1(B_{r})} + r^j \|u\|_{L^1(B_{r})}^{a_{m,j,n}} \|\nabla^m u \|^{1-a_{m,j,n}}_{L^\infty(B_{r})}\right).\]
\end{lemma}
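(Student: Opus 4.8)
This is a standard interpolation/Gagliardo--Nirenberg type estimate on a Euclidean ball, and the cleanest route is a scaling reduction followed by a compactness (or direct) argument on the unit ball. First I would reduce to the case $r=1$ by setting $v(z) = u(rz)$ for $z \in B_1^n$, so that $\|\nabla^j v\|_{L^\infty(B_1)} = r^j\|\nabla^j u\|_{L^\infty(B_r)}$, $\|v\|_{L^1(B_1)} = r^{-n}\|u\|_{L^1(B_r)}$, and $\|\nabla^m v\|_{L^\infty(B_1)} = r^m \|\nabla^m u\|_{L^\infty(B_r)}$. Plugging these in, the claimed inequality for $u$ becomes exactly the inequality for $v$ on $B_1$, namely
\[
\|\nabla^j v\|_{L^\infty(B_1)} \leq C\left( \|v\|_{L^1(B_1)} + \|v\|_{L^1(B_1)}^{a}\,\|\nabla^m v\|_{L^\infty(B_1)}^{1-a}\right), \qquad a = \frac{m-j}{m+n},
\]
provided the exponents on $r$ match; one checks $r^{-n}\cdot 1 = r^{-n}$ for the first term and $r^{-na}\cdot r^{m(1-a)} = r^{j}$ for the second, using $-na + m(1-a) = m - a(m+n) = m-(m-j) = j$. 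So the scaling is consistent and it suffices to prove the unit-ball statement.

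On $B_1$, I would first establish the unscaled interpolation bound $\|\nabla^j v\|_{L^\infty(B_1)} \leq C(\|v\|_{L^1(B_1)} + \|\nabla^m v\|_{L^\infty(B_1)})$ — this is a routine consequence of, e.g., the Sobolev/Morrey embeddings combined with elliptic or Poincar\'e-type estimates on the ball, or can be extracted from the standard Gagliardo--Nirenberg inequalities on bounded Lipschitz domains. Then I would upgrade the second term to the interpolated power $\|v\|_{L^1}^{a}\|\nabla^m v\|_{L^\infty}^{1-a}$ via a second scaling: for $0 < \lambda \le 1$ apply the unscaled bound to $v$ on the smaller ball $B_\lambda$ (rescaled to $B_1$), which gives $\lambda^j\|\nabla^j v\|_{L^\infty(B_\lambda)} \le C(\lambda^{-n}\|v\|_{L^1(B_\lambda)} + \lambda^m\|\nabla^m v\|_{L^\infty(B_\lambda)}) \le C(\lambda^{-n}\|v\|_{L^1(B_1)} + \lambda^m \|\nabla^m v\|_{L^\infty(B_1)})$, hence $\|\nabla^j v\|_{L^\infty(B_{1/2})} \le C(\lambda^{-n-j}\|v\|_{L^1(B_1)} + \lambda^{m-j}\|\nabla^m v\|_{L^\infty(B_1)})$ after a covering argument to pass from small balls to $B_{1/2}$. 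Optimizing the free parameter $\lambda$ — choosing $\lambda \sim (\|v\|_{L^1}/\|\nabla^m v\|_{L^\infty})^{1/(m+n)}$ when this is $\le 1$, and $\lambda = 1$ otherwise — produces exactly the exponent $a = \frac{m-j}{m+n}$ and the stated form (the "otherwise" case being absorbed into the $\|v\|_{L^1}$ term).

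The main technical point to be careful about is the passage from estimates on interior sub-balls to an estimate on all of $B_1$ (or $B_r$): the naive scaling argument only controls $\nabla^j v$ on a concentric smaller ball, so one needs a Vitali-type covering of $B_1$ by balls of radius comparable to $\lambda$ together with a uniform count depending only on $n$, or alternatively an extension/reflection argument to trade the boundary for interior estimates. This is where the constant's dependence on $m,j,n$ (and nothing else) is genuinely verified. Everything else — the embedding inputs and the optimization in $\lambda$ — is routine. Since this lemma is quoted from \cite{Z20} and \cite{CM15}, in the write-up I would state the reduction to $r=1$ explicitly and then cite those references for the unit-ball case rather than reproving the Gagliardo--Nirenberg input from scratch.
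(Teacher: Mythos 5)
The paper does not actually prove this lemma; Appendix A simply recalls it and points to \cite[Appendix A]{Z20} and \cite[Appendix B]{CM15}, so there is no in-paper argument to compare against. Your proposal is the standard and correct route: the reduction to $r=1$ via $v(z)=u(rz)$ is exactly right, and your exponent check $-na+m(1-a)=m-a(m+n)=j$ confirms the scaling is consistent with $a=\frac{m-j}{m+n}$. You also correctly flag the one genuine technical subtlety, namely that the naive $\lambda$-rescaling only yields interior estimates and one must handle the neighborhood of $\partial B_1$ by an extension/reflection or a covering that does not degenerate at the boundary; the two-case optimization in $\lambda$ (use the interpolated term when $\|v\|_{L^1}\le\|\nabla^m v\|_{L^\infty}$, absorb into the $L^1$ term otherwise) is also handled. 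Given that the paper itself defers to the cited references, citing them for the unit-ball input while writing out the scaling reduction, as you propose, is exactly the appropriate level of detail.
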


The lemma above also holds for tensor quantities on a manifold with uniformly bounded geometry. It follows that on a generalised cylinder $\Gamma = \mathring{\Gamma}^k\times \mathbb{R}^{n-k}$, for large enough $R$
\begin{equation}\|u\|_{W^{j,p}_\rho(B_{R-1}) }\leq C(m,j,n,p,M_0,M_m) \|u\|_{L^p_\rho(B_R)}^{a_{m,j,n}}.\end{equation}

\bigskip

\bibliographystyle{plain}
\bibliography{taylor}

\end{document}